\documentclass{amsart}

\usepackage[cmtip,all]{xy}
\usepackage{tikz}
\usetikzlibrary{matrix,cd}
\usepackage{mathtools}
\usepackage{mathabx}
\usepackage{mathrsfs}
\usepackage{float}
\usepackage{hyperref}
\usepackage{cleveref}
\usepackage{tabstackengine}

\usepackage{bbm}
\usepackage{ytableau}
\hyphenation{semi-stable}


\newtheorem{theorem}{Theorem}[section]
\newtheorem{lemma}[theorem]{Lemma}

\newtheorem{corollary}[theorem]{Corollary}

\newtheorem*{theorem*}{Theorem}

\newtheorem*{theoremSec}{Theorem \ref{sec}}

\theoremstyle{definition}
\newtheorem{definition}[theorem]{Definition}
\newtheorem{example}[theorem]{Example}

\theoremstyle{remark}
\newtheorem{remark}[theorem]{Remark}

\numberwithin{equation}{section}


\newcommand{\N}{\mathbb{N}}
\newcommand{\Z}{\mathbb{Z}}
\newcommand{\PP}{\mathbb{P}}
\newcommand{\BC}{\mathcal{B}}

\newcommand{\Tstrut}{\rule{0pt}{3ex}}
\newcommand{\Bstrut}{\rule[-1.6ex]{0pt}{0pt}}

\newcommand\A[1]{{A_{#1}(\lambda,\mu)}}
\newcommand\B[1]{B_{#1}(\lambda,\mu)}
\newcommand\g[2]{g_{#1}(\lambda,\mu;{#2})}
\newcommand\h[2]{h_{#1}(\lambda,\mu;{#2})}

\newcommand{\ugly}[1]{{[#1]}}

\newcommand{\order}{\succeq}
\newcommand{\nonzero}{\mu\order\lambda}
\newcommand{\aij}[2]{(\lambda^{\ugly {#1}}, \mu^{\ugly {#2}})}

\newcommand{\genedifflow}[2]{I_{{#1}_{#2}}}

\newcommand{\ev}{\mathrm{ev}}

\newcommand{\bigzero}{\mbox{\normalfont\Large\bfseries 0}}

\makeatletter
\newcommand{\exterior}[1]{\mathop{\mathpalette\exterior@{#1}}}
\newcommand{\exterior@}[2]{
  \raisebox{\depth}{
  \fontsize{\sf@size}{0}
  \m@th
  $\ifx#1\displaystyle\textstyle\else#1\fi\bigwedge$}
  ^{\mspace{-2mu}#2}
  \kern-\scriptspace
}
\makeatother

\begin{document}

\title{A Uniform Identification of Stable Sheaf Cohomology}

\author{Luca Fiorindo} 
\address{Dipartimento di Matematica, Dipartimento di Eccellenza 2023-2027, Università di Genova, via Dodecaneso 35, 16146 Genova, Italy}
\email{luca.fiorindo@edu.unige.it}
\thanks{The first and the fourth authors are partly supported by the MIUR Excellence Department Project awarded to Dipartimento di Matematica, Università di Genova, CUP\, D33C23001110001, and are members of GNSAGA (INdAM)}
\urladdr{\url{https://orcid.org/0000-0002-6435-0128}}

\author{Ethan Reed}
\address{Department of Mathematics, University of Notre Dame, 255 Hurley, Notre Dame, IN 46556}
\email{ereed4@nd.edu}
\thanks{The second author was partially supported by National Science Foundation Grant DMS-2302341.}
\urladdr{\url{https://orcid.org/0009-0003-6620-8664}}

\author{Shahriyar Roshan Zamir}
\address{Department of Mathematics, University of Nebraska-Lincoln, 1144 T St Lincoln, NE 68588}
\email{sroshanzamir2@huskers.unl.edu}
\thanks{The third author received support from National Science Foundation Grant DMS-2101225.}

\author{Hongmiao Yu}
\address{Dipartimento di Matematica, Dipartimento di Eccellenza 2023-2027, Università di Genova, via Dodecaneso 35, 16146 Genova, Italy}
\email{yu@dima.unige.it}
\thanks{The fourth author is supported by PRIN 2020355B8Y ``Squarefree Gr\"obner degenerations, special varieties and related topics''.}

\subjclass[2020]{Primary 14F06, 05A19; Secondary 13D02, 20G10}

\begin{abstract}
This paper considers generalizations of certain arithmetic complexes appearing in the work of Raicu and VandeBogert in connection with the study of stable sheaf cohomology on flag varieties. Defined over the ring of integer valued polynomials, we prove an isomorphism of these complexes as conjectured by Gao, Raicu, and VandeBogert. In particular, this shows that a previously made identification between the stable sheaf cohomology of hook and two column partition Schur functors applied to the cotangent sheaf of projective space can be made to be uniform with respect to these complexes. These results are extended to the projective space defined over the integers.
\end{abstract}

\maketitle

\section{Introduction}
A central problem connecting representation theory and algebraic geometry is the calculation of sheaf cohomology of line bundles on full flag varieties over an algebraically closed field ${\bf k}$. Recall that for every natural number $n$, the flag variety $Fl_n$ is the variety parameterizing complete flags of a vector space $V$ of dimension $n$, where a complete flag of $V$ is a filtration $0\subset V_1\subset\dots\subset V_{n-1}\subset V={\bf k}^n$ with $\dim_{{\bf k}} V_i=i$. In characteristic zero, the Borel-Weil-Bott theorem \cite{borelweil}, \cite{bott} computes the sheaf cohomology groups and shows line bundles have at most one non-zero cohomology group which is an irreducible representation of the general linear group. However, the sheaf cohomology groups can be much more complicated in positive characteristic. 
For instance, Raicu and VandeBogert \cite{raicukeller} showed that the number of irreducible factors in a filtration of the cohomology groups cannot be bounded by a polynomial in terms of the cohomological degree, dimension of the flag variety, and the combinatorial data defining the line bundle. Nonetheless, some notable results in positive characteristic are: the Kempf vanishing theorem \cite{kempf}, \cite{Kempf-LinearSystems}, \cite{Kempf-Flagmanifolds}, \cite{haboush}, \cite{Ander-Frob}, a characterization of non-vanishing of the first cohomology group by Andersen \cite{Andersen}, and a full description when the underlying vector space is $3$-dimensional by Donkin \cite{Donkin}.

The most relevant result in positive characteristic for this paper is a recent stability result of Raicu and VandeBogert \cite{raicukeller}. They show that for fixed input data, the sheaf cohomology groups can be described uniformly using polynomial functors as long as the dimension of $V$ is large enough. This motivates the terminology \textit{stable sheaf cohomology}. A related statement of \cite{raicukeller} is the following: for the cotangent sheaf $\Omega$ on projective space $\mathbb{P}^n({\bf k})$, and a fixed polynomial functor $\mathcal{P}$, the sheaf cohomology groups $H^j(\mathbb{P}^n({\bf k}), \mathcal{P}(\Omega))$ have trivial $\text{GL}_n({\bf k})$-action and are independent of $n$, for $n$ sufficiently large. In the special case when $\mathcal{P}=\mathbb{S}_{\lambda}$ is a Schur functor, the sheaf cohomology groups $H^j(\mathbb{P}^n({\bf k}), \mathbb{S}_{\lambda}(\Omega))$ coincide with the sheaf cohomology groups of certain line bundles on the flag variety $Fl_n$. In the case when $\lambda$ is a ribbon or two column Schur functor, the former of which enjoys nice combinatorial properties (\cite{lascouxpragacz}, \cite{Reiner}), Raicu and VandeBogert construct explicit universal resolutions of the aforementioned functors, the existence of which stems from \cite{AB88}, to derive \textit{arithmetic complexes}, denoted $C^{\bf k}_{\bullet}(w_0,\ldots,w_n)$, and employ them to show an identification for stable sheaf cohomology of two column and hook Schur functors.

The complexes $C_{\bullet}(\underline{w})$ are our main topic of study; they are defined in \Cref{section2} over $R = \mathbb{Z}\langle \binom{x}{n}\rangle$, the ring of integer valued polynomials, as opposed to their original definition over ${\bf k}$. This definition allows a formulation of a conjecture by Gao, Raicu, and VandeBogert \cite[Conjecture 2.3]{raicukellerZhao}: the identification of sheaf cohomology of two column and hook Schur functors is uniform with respect to varying the length of the first column of both. A proof of this conjecture is the crux of \Cref{newresults} and is now stated as a theorem.

\begin{theorem}\label{main}
For every integer $d\ge 1$, there exists an isomorphism of complexes 
    \[C_\bullet(x,1^d)\cong C_\bullet(-x-2d,1^d).\]
\end{theorem}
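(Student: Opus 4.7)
The plan is to build the isomorphism explicitly, term by term, using that $\tau \colon x \mapsto -x - 2d$ is an involutive $\mathbb{Z}$-algebra automorphism of $R = \mathbb{Z}\langle \binom{x}{n}\rangle$. After unpacking the explicit description of $C_\bullet(w_0, 1^d)$ given in \Cref{section2}, each term should be a free $R$-module whose combinatorial skeleton depends only on $d$ and the homological degree (since $1^d$ is fixed on both sides), with the $w_0$-dependence entering only through binomial-type coefficients on generators and through polynomial coefficients in the differentials. Thus the only place $\tau$ can act nontrivially is on these coefficients, which suggests that the sought isomorphism $\varphi_\bullet$ is transport of structure along $\tau$, twisted on each term by an $R$-linear combinatorial automorphism such as a signed reversal of an indexing set.

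After recording the explicit form of $C_\bullet(x,1^d)$ and separating its $x$-dependence from the fixed combinatorial data, I would write down a candidate $\varphi_i\colon C_i(x,1^d) \to C_i(-x-2d,1^d)$ on generators using the negation identity
\[\binom{-y-1}{k} = (-1)^k \binom{y+k}{k},\]
which is the fundamental reflection identity for integer-valued polynomials and governs how every binomial coefficient in $x$ transforms. The affine shift by $2d$ rather than simply $-x$ should correspond to centering the involution at $x = -d$, the natural pivot once the $d$ trailing $1$'s are accounted for symmetrically; this is also a hint that $\varphi_i$ ought to reverse indices on the combinatorial data in addition to negating parameters. The chain-map condition $\varphi_{i-1} \circ d_i = d_i' \circ \varphi_i$ can then be attacked either by direct combinatorial manipulation of the differentials, or inductively by relating $C_\bullet(x,1^{d+1})$ to $C_\bullet(x,1^d)$ through a mapping cone: inserting a new $1$ shifts the pivot of the involution by exactly $1$, which matches the passage from $-x-2d$ to $-x-2(d+1)$.

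The principal obstacle lies in the interaction of $\tau$ with the \emph{full} structure of the differentials rather than merely with scalar coefficients: the boundary maps in $C_\bullet$ typically combine Koszul or contraction operations within Schur/Weyl functor pieces with polynomial weights in $w_0$, and one must show the combinatorics repackages consistently after substitution. Once the chain-map identity is verified, the isomorphism claim is nearly automatic because $\tau$ is an involution, so $\varphi_\bullet$ composed with its $\tau$-conjugate should collapse to the identity after sign bookkeeping. Should a direct verification prove intractable, a fallback is to realize $C_\bullet(x, 1^d)$ as the total complex of a bicomplex (or an Eagon--Northcott-type construction) in which the $\tau$-symmetry is manifest, reducing the claim to an intrinsic statement about the parameters and thereby sidestepping the explicit combinatorial matching.
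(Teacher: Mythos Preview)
Your proposal rests on a misreading of what the isomorphism must be. Both $C_\bullet(x,1^d)$ and $C_\bullet(-x-2d,1^d)$ are complexes of free $R$-modules with the \emph{same} combinatorial bases; the theorem asks for an $R$-linear chain isomorphism between them. The observation that $\tau:x\mapsto -x-2d$ is a ring automorphism only tells you that applying $\tau$ entrywise to the differential $\partial^x$ yields $\partial^{-x-2d}$, i.e.\ that $\tau^*C_\bullet(x,1^d)\cong C_\bullet(-x-2d,1^d)$. That is a tautology, not the theorem: it produces a $\tau$-semilinear identification, not an $R$-linear one, and no ``signed reversal'' on the bases will repair this. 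Concretely, already for $d=3$ the map in homological degree $2$ is the integer matrix
\[
\begin{pmatrix}-1&-2&-1\\0&1&1\\0&0&-1\end{pmatrix},
\]
which is upper triangular with nontrivial off-diagonal entries---not a signed permutation. The paper's map $\alpha_{d-t+1}$ sends $f_{(\lambda_t,\dots,\lambda_1)}$ to a sum over all $f_{(\mu_t,\dots,\mu_1)}$ with coefficients that are products of binomial coefficients in the $\lambda_j,\mu_j$; finding this formula and verifying the chain-map identity is the entire content, and it requires genuine combinatorial identities (Vandermonde and a more delicate three-binomial summation).

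Your mapping-cone idea is on the right track structurally---the paper notes that $C_\bullet(x,1^d)$ is the cone of a map $C_\bullet(x+1,1^{d-1})\to C_\bullet(1^d)$, and the block form of $\alpha^d$ reflects this---but the inductive step does not close without supplying an explicit homotopy $\gamma$ between $(-1)^d\phi_x$ and $\phi_{-x-2d}\circ\alpha^{d-1}$, and that homotopy is exactly the off-diagonal block whose entries you have not guessed. In short, the negation identity $\binom{-y-1}{k}=(-1)^k\binom{y+k}{k}$ is an ingredient, but it is far from sufficient: the proof requires writing down the correct upper-triangular integer matrices and checking commutativity via nontrivial binomial-sum identities, none of which your proposal supplies.
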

Here $1^d$ records a repeated entry of $1$ $d$-times. Further, using the techniques in \cite{raicukeller} one immediately obtains analogous stability results for $\mathbb{S}_{\lambda / \mu}\Omega$ for the projective space over the integers where $\mathbb{S}_{\lambda / \mu}$ denotes a skew Schur functor. These functors over arbitrary commutative rings are discussed in \cite{akinbuchsbaumweyman}. For ribbons or two column partitions, the stable cohomology groups are given by the homology of complexes $C_{\bullet}^{\mathbb{Z}}(\underline{w})$, now interpreted as complexes of free abelian groups. The discussion above is made precise in \Cref{applications} and motivates the next result which relies on \Cref{main}.

\begin{theorem}\label{sec}
     Let $\Omega$ be the cotangent sheaf over $\mathbb{P}^{n}(\mathbb{Z})$ and $\mathbb{S}_{\lambda/ \mu}$ be a skew Schur functor. The following is true for all $i$.
    \begin{enumerate}
    \item[$i)$] $H^i(\mathbb{P}^n(\Z), \mathbb{S}_{\lambda/\mu} \Omega)$ is independent of $n$ for $n \gg 0$, and is denoted by\\ $H^i_{st}(\mathbb{S}_{\lambda/\mu} \Omega)$.
    \item[$ii)$] If $\mathbb{S}_{\lambda/\mu}$ is a ribbon Schur functor corresponding to a $(w_0, ... , w_d)$, then 
    \[H^i_{st}(\mathbb{S}_{\lambda/\mu} \Omega) = H_{|w|-i}(C_{\bullet}(\underline{w})\otimes_R \mathbb{Z}).\]
    \item[$iii)$] If $\lambda$ is a two column partition, i.e. the conjugate partition has the form $\lambda' = (m, d)$ for integers $m,d\geq 0$, then 
    \[H_{st}^i(\mathbb{S}_{\lambda} \Omega) = H_{d+m-i}(\widecheck{C_{\bullet}}(-m-d-1,1^d)[-d]\otimes_R \mathbb{Z}),\]
    where $\widecheck{C_{\bullet}}(-m-d-1,1^d)$ denotes the dual of the complex $C_{\bullet}(-m-d-1,1^d)$.
    \item[$iv)$] Consider $\lambda$ as in $iii)$ and let $d \geq 1$, then
    \[H_{st}^i(\mathbb{S}_{\lambda} \Omega) = H_{st}^{2m+2-i}(\mathbb{S}_{(m-d+1, 1^d)}\Omega).\]
    \end{enumerate}
\end{theorem}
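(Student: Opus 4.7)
The plan is to treat parts (i)--(iii) as direct extensions of the field-theoretic identifications in \cite{raicukeller} to the projective space over $\mathbb{Z}$, and to deduce (iv) as a formal consequence of (ii), (iii) and \Cref{main}. For (i)--(iii), the Euler sequence for $\Omega$ on $\mathbb{P}^n(\mathbb{Z})$ is already defined over $\mathbb{Z}$, and skew Schur functors $\mathbb{S}_{\lambda/\mu}$ make sense over any commutative ring following \cite{akinbuchsbaumweyman}. Applying $\mathbb{S}_{\lambda/\mu}$ term-wise to a universal resolution of $\Omega$ yields a resolution of $\mathbb{S}_{\lambda/\mu}\Omega$ by direct sums of line bundles. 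Since the cohomology of $\mathcal{O}(w)$ on $\mathbb{P}^n(\mathbb{Z})$ is free abelian when non-zero and vanishes in the same ranges as over a field, the hypercohomology spectral sequence used in \cite{raicukeller} carries over verbatim. This produces stability in $n$, giving (i); the explicit identification of stable cohomology with $H_\ast\bigl(C_\bullet(\underline w)\otimes_R \mathbb{Z}\bigr)$ in the ribbon case, giving (ii); and the analogous identification with the dualized and shifted arithmetic complex in the two-column case, giving (iii). Freeness of each term of $C_\bullet(\underline w)$ as an $R$-module is what permits the base change $R\to \mathbb{Z}$ at every stage.

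For (iv), I would apply (iii) to the left-hand side and (ii) with $\underline w = (m-d+1, 1^d)$, $|w|=m+1$ to the right-hand side, which reduces the desired identity to
\[H_{d+m-i}\bigl(\widecheck{C_\bullet}(-m-d-1,1^d)[-d]\otimes_R \mathbb{Z}\bigr) = H_{i-m-1}\bigl(C_\bullet(m-d+1,1^d)\otimes_R \mathbb{Z}\bigr).\]
\Cref{main}, applied with $x = m-d+1$ so that $-x-2d = -m-d-1$, provides the isomorphism $C_\bullet(m-d+1,1^d) \cong C_\bullet(-m-d-1,1^d)$. It then remains to verify a purely formal identity: dualizing a bounded complex of finite free $R$-modules and applying the shift $[-d]$ relates homology in degree $i-m-1$ to homology of the dual in degree $d+m-i$. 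This is a direct unwinding of the conventions for $\widecheck{\phantom{x}}$ and $[\cdot]$ introduced in \Cref{section2}, together with the observation that the total length of $C_\bullet(-m-d-1, 1^d)$ is compatible with the shift $d$ so that the two indices become complementary.

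The main obstacle I anticipate is the careful index bookkeeping in (iv): one must confirm that the two homological degrees $d+m-i$ and $i-m-1$ are matched by the combined effect of dualization and the shift $[-d]$ on $C_\bullet(-m-d-1, 1^d)$. Beyond this, the substantive input is \Cref{main}, while the machinery behind (i)--(iii) is essentially in place once one checks that the relevant sheaf cohomology groups on $\mathbb{P}^n(\mathbb{Z})$ are free abelian and that the universal resolutions of \cite{akinbuchsbaumweyman} provide the needed $\mathbb{Z}$-form of the Raicu-VandeBogert constructions.
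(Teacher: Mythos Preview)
Your outline for (i)--(iii) is close in spirit to the paper's, but the mechanism is not quite what you describe. One does not apply $\mathbb{S}_{\lambda/\mu}$ to a resolution of $\Omega$ to obtain a resolution by line bundles; Schur functors are not exact, so this would not produce a resolution. What the paper does (following \cite{AB88}) is resolve the \emph{functor} $\mathbb{S}_{\lambda/\mu}$ by tensor products of exterior power functors and then plug in $\Omega$. The terms of the resulting resolution are sheaves of the form $\bigwedge^{d_1}\Omega\otimes\cdots\otimes\bigwedge^{d_h}\Omega$, not line bundles, and one then computes their cohomology on $\mathbb{P}^n(\mathbb{Z})$ directly (it is $\mathbb{Z}$ in a single degree for $n\ge d$), using the exterior-power Euler sequences. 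Only after that does the hypercohomology spectral sequence collapse to a single row. This is a correctable misdescription rather than a fatal error, but as written your argument for (i)--(iii) does not go through.

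The substantive gap is in (iv). You assert that the equality
\[
H_{d+m-i}\bigl(\widecheck{C_\bullet}(-m-d-1,1^d)[-d]\otimes_R \mathbb{Z}\bigr)\;=\;H_{i-m-1}\bigl(C_\bullet(-m-d-1,1^d)\otimes_R \mathbb{Z}\bigr)
\]
is ``a purely formal identity'' coming from unwinding the conventions for dualization and shift. It is not. Note that $(d+m-i)+(i-m-1)=d-1$, not $d$; the indices are \emph{not} complementary with respect to the shift $[-d]$, so no amount of reindexing alone will match them. Undoing the shift and the dual, the left side is the cohomology group $H^{\,i-m}(C_\bullet\otimes_R\mathbb{Z})$, and the universal coefficient theorem gives only a short exact sequence relating this to $H_{i-m}$ and $H_{i-m-1}$. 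To get the claimed isomorphism with $H_{i-m-1}$ one needs that every $H_j\bigl(C_\bullet(-m-d-1,1^d)\otimes_R\mathbb{Z}\bigr)$ is torsion. The paper supplies this by observing that $C_\bullet(-m-d-1,1^d)\otimes_R\mathbb{Q}$ is exact, which in turn follows from Borel--Weil--Bott applied to $\mathbb{S}_\lambda\Omega$ over $\mathbb{Q}$ when $d\ge 1$. Without this torsion input, your reduction to \Cref{main} does not close.
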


\section{Arithmetic Complexes} \label{section2}

Let $R= \mathbb{Z}\langle\binom{x}{n}\rangle$. In this paper, our main objects of study are $C_{\bullet}(\underline{w})$ with $\underline{w}=(w_0,\dots,w_d)\in R\times \N^{d-1}$. The definition in \cite[Section 5]{raicukeller} can be recovered by choosing any integer evaluation map $\ev:R\longrightarrow\Z$. One also gets a ring map $R\longrightarrow\bf k$ by post-composing the natural morphism $\Z\longrightarrow {\bf k}$, then
$$C_{\bullet}(w_0, w_1, ... , w_d)\otimes_R {\bf k} \cong C_{\bullet}^{\bf k}(\ev(w_0), w_1, ... ,w_d).$$
Similarly, one can define $C_{\bullet}^{\mathbb{Z}}(\underline{w})$ by using any evaluation map to $\mathbb{Z}$ to tensor over $\mathbb{Z}$.

For a combinatorial description, consider the undirected, weighted path graph of $d+1$ vertices and $d$ edges:
\begin{figure}[H]
\centerline{
\xymatrix@R=0pt{
 \overset{w_0}{\bullet}\ar@{-}[r]&\overset{w_1}{\bullet}\ar@{-}[r]&\overset{w_2}{\bullet}\ar@{..}[r]&\overset{w_{d-1}}{\bullet}\ar@{-}[r]&\overset{w_d}{\bullet}}}
 \ \\
 \caption{Undirected Weighted Path Graph with Vertex Weights $\underline{w}$}
 \label{UWPG}
\end{figure}
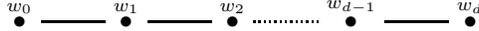
\noindent where $w_i$'s denote weights on the vertices. Given $d,t\in\N$ such that $1\le t\leq d+1$ for any weak composition of $d-t+1$, that is non-negative integers $(\lambda_t, \ldots, \lambda_1)$ such that $\sum_{i=1}^{t} \lambda_i = d-t+1$, there is a  unique decomposition of the above path graph into $t$ disjoint intervals $I_{\lambda_i}$ of length $\lambda_i$. If $\lambda_i=0$ then $I_{\lambda_i}$ consists of a single vertex.

The {\it weight of an interval} $I_{\lambda}$, denoted $\omega(I_{\lambda})$, is the sum of the weights of its vertices. For instance, the weights of the intervals in \Cref{d6t4.} are $\omega(I_{\lambda_4})=w_0$, $\omega(I_{\lambda_3})=w_1+w_2+w_3$, $\omega(I_{\lambda_2})=w_4$, and $\omega(I_{\lambda_1})=w_5+w_6$.

 \begin{figure}[H]
    \centerline{\xymatrix@R=0pt{
        \overset{w_0}{\bullet}&\overset{w_1}{\bullet}\ar@{-}[r]&\overset{w_2}{\bullet}\ar@{-}[r]&\overset{w_3}{\bullet}&\overset{w_4}{\bullet}&\overset{w_5}{\bullet}\ar@{-}[r]&\overset{w_6}{\bullet}\\
        I_{\lambda_4}&&I_{\lambda_3}&&I_{\lambda_2}&{}\save[]+<0.8cm,0cm>*\txt<8pc>{$\genedifflow\lambda 1$}\restore}
    }
    \caption{A decomposition for $d=6$, $t=4$, and $(\lambda_4,\lambda_3,\lambda_2,\lambda_1)=(0,2,0,1)$.}
    \label{d6t4.}
\end{figure}
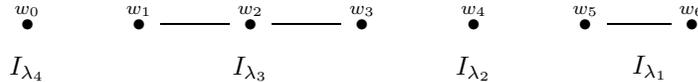 

\begin{definition}\label{def}
Define the arithmetic complex, $C_\bullet(\underline w)=C_\bullet(w_0,\dots, w_d)$, as follows: for each $k\in\Z$,
$$C_k(\underline w):=\underset{\sum_{i=1}^t \lambda_i=k}{\bigoplus} R\cdot f_{(\lambda_t,\ldots,\lambda_1)}\cong R^{\oplus\binom{d}{k}},$$
where $t=d-k+1\ge 1$, and $f_{(\lambda_t,\ldots,\lambda_1)}$ corresponds to a basis element of $C_k(\underline w)$. Note the basis elements of $C_k(\underline{w})$ are in bijective correspondence with decompositions of the path graph in \Cref{UWPG}. The differential $\partial_k$ is computed by removing a single edge from an interval of a basis element of $C_k(\underline w)$. More precisely, 
\begin{eqnarray*}
    \partial_k: C_k(\underline w)&\longrightarrow& C_{k-1}(\underline w)\\
    f_{(\lambda_t,\dots,\lambda_1)}&\mapsto &\sum_{j=1}^{t}\sum_{i=1}^{\lambda_j} (-1)^{t-j} \binom{\omega(I_{\lambda_j})}{\omega(I_{\lambda_j}\setminus\{i\})} f_{(\lambda_t,\dots,\lambda_{j+1},i-1,\lambda_j-i,\lambda_{j-1},\dots,\lambda_1)},
\end{eqnarray*}
where $I_\lambda\setminus\{i\}$ denotes the subinterval of $I_\lambda$ to the right of the $i$-th edge (counting starting at the left).  
\end{definition}

Our focus is when $w_0=x$ or $w_0=-x-2d$, and $w_1=\dots=w_d=1$ with the aim of establishing an isomorphism between $C_\bullet(x, 1^d)$ and $C_\bullet(-x-2d, 1^d)$.
\begin{remark} 
 When $w_0=-x-2d$ the following binomial identity is used in computing the differentials:
$$\binom{-x-m}{n}=(-1)^{n}\binom{x+m+n-1}{n}$$ for $m,n\in \Z$. We adopt the convention $\binom{x+m}{0}=1$ and $\binom{x+m}{n}=0$ for $n\in \Z_-$.

\end{remark}

\begin{example}\label{d3d4}
Computations with Macaulay2 \cite{M2} exemplify an isomorphism from $C_\bullet(x, 1^d)$ to $C_\bullet(-x-2d, 1^d)$ for $d=3$.

$$\begin{tikzcd}[ampersand replacement=\&]
0 \arrow{r} \& R \arrow{r}{\left(\begin{smallmatrix}
      \binom{x+3}{3}\\
      \binom{x+3}{2}\\
      x+3\\
      \end{smallmatrix}\right)} \arrow{d}[description]{(1)} \&[4em] R^3 \arrow{r}{\left(\begin{smallmatrix}
      -3&x+1&0\\
      -3&0&\binom{x+2}{2}\\
      0&-2&x+2\\
      \end{smallmatrix}\right)} \arrow{d}[description]{\left(\begin{smallmatrix}
          -1 & -2 & -1\\
          0 & 1 & 1\\
          0 & 0 & -1
      \end{smallmatrix}\right)} \&[7em] R^3 \arrow{r}{\left(\begin{smallmatrix}
      2,&-2,&x+1\\
      \end{smallmatrix}\right)} \arrow{d}[description]{\left(\begin{smallmatrix}
          -1 & 0 & -1\\
          0 & -1 & -3\\
          0 & 0 & 1
      \end{smallmatrix} \right)}\&[3em] R \arrow{d}[description]{(-1)} \arrow{r} \& 0\\[8ex]     
        0 \arrow {r} \& R \arrow{r}{\left(\begin{smallmatrix}
      -\binom{x+5}{3}\\
      \binom{x+4}{2}\\
      -x-3\\
      \end{smallmatrix}\right)} \& R^3 \arrow{r}{\left(\begin{smallmatrix}
      -3&-x-5&0\\
      -3&0&\binom{x+5}{2}\\
      0&-2&-x-4\\
      \end{smallmatrix}\right)} \& R^3 \arrow{r}{\left(\begin{smallmatrix}
      2,&-2,&-x-5\\
      \end{smallmatrix}\right)} \& R \arrow{r} \& 0
\end{tikzcd}$$
\end{example}

In \Cref{d3d4}, the isomorphism maps in homological degree $d-t+1$ can be obtained using the following formulas:

\noindent$t=1$: $f_{\lambda_1} \mapsto f_{\mu_1}$,

\noindent$t=2$: $f_{(\lambda_2,\lambda_1)} \mapsto {\displaystyle \sum_{\mu_1+\mu_2=d-1}(-1)^{d-\mu_2}} \binom{\mu_1}{\lambda_1} \cdot f_{(\mu_2, \mu_1)}$,

\noindent$t=3$: $f_{(\lambda_3,\lambda_2,\lambda_1)}\mapsto {\displaystyle  \sum_{\mu_1+\mu_2+\mu_3=d-2}(-1)^{d-\mu_3}} \binom{\mu_1}{\lambda_1} \binom{\lambda_1+\mu_1+\mu_2+2}{\lambda_1+\mu_1+\lambda_2+2} \cdot f_{(\mu_3,\mu_2,\mu_1)}$ .

A generalization of this pattern is given by ${\alpha_{\bullet}: C_\bullet(x,1^d) \longrightarrow C_\bullet(-x-2d,1^d)}$ where
for an arbitrary $d$ the map $\alpha_{d-t+1}: C_{d-t+1}(x,1^d) \longrightarrow C_{d-t+1}(-x-2d,1^d)$ is defined as 
\begin{eqnarray*}
&&\alpha_{d-t+1}(
f_{(\lambda_t,\lambda_{t-1},\dots,\lambda_1)})\\
&=& \sum_{\mu_1+\dots+\mu_t=d-t+1} (-1)^{d-\mu_{t}} \prod_{s=0}^{t-2} \binom{\sum_{j=1}^s (\lambda_j+\mu_j)+\mu_{s+1}+2s}{\sum_{j=1}^s (\lambda_j+\mu_j)+\lambda_{s+1}+2s}f_{(\mu_t,\mu_{t-1},\dots,\mu_1)}.
\end{eqnarray*}

The main result of the next section is that $\alpha_{\bullet}$ is indeed an isomorphism of complexes.

\section{Main results}\label{newresults}

This section is devoted to the proof of \Cref{main} which is broken into two parts. First, it is shown that $\alpha_{\bullet}$ defines a homomorphism of complexes (using lemmata \ref{lemmone} and \ref{bluecalculation}).  Next, \Cref{thirdlemma} shows that the matrices in $\alpha_{\bullet}$ are invertible by using an ordering of the basis, as explained in \Cref{basisrmk}. Finally, \Cref{uselesscorollary} provides a more specific description of those matrices.

\begin{lemma}\label{lemmone}
    Let $d,t \in \N$, $t\geq1$. For weak compositions $\lambda=(\lambda_t,\dots,\lambda_1)$ of $d-t+1$, and  $\mu=(\mu_{t+1},\mu_t,\dots,\mu_1)$ of $d-t$, define the following quantities:
    $$A_k(\lambda,\mu):=\prod_{s=0}^{k-2} \binom{\sum_{m=1}^{s} (\mu_m+\lambda_m)+\mu_{s+1}+2s} {\sum_{m=1}^{s} (\mu_m+\lambda_m)+\lambda_{s+1}+2s},$$
    $$B_k(\lambda,\mu):=\prod_{s=k+1}^{t-1} \binom{\sum_{m=1}^{s-1} (\mu_m+\lambda_m)+\mu_{s}+\mu_{s+1}+2s-1}{\sum_{m=1}^{s} (\mu_m+\lambda_m)+2s-1},$$
    $$g_k(\lambda,\mu;z):=\binom{\sum_{m=1}^{k-1} (\mu_m+\lambda_m)+\mu_{k}+2(k-1)}{\sum_{m=1}^{k-1} (\mu_m+\lambda_m)+\lambda_{k}-z+2(k-1)},$$
    $$h_k(\lambda,\mu;z) :=\binom{\sum_{m=1}^{k} (\mu_m+\lambda_m)+\mu_{k+1}-z+2(k-1)+2}{\sum_{m=1}^{k} (\mu_m+\lambda_m)+2(k-1)+1},$$
    where  $k=1,\dots,t$ and $z\in\Z$. Then the following hold
    \begin{enumerate}
        \item[$i$)] $g_1(\lambda,\mu;\lambda_1+1)=0;$
        \item[$ii$)] $A_{k}(\lambda,\mu)\cdot g_{k}(\lambda,\mu;0)=A_{k+1}(\lambda,\mu)$ and $h_{k}(\lambda,\mu;0)=g_{k+1}(\lambda,\mu;\lambda_{k+1} +1)$, for $1\le k\le t-1$;
        \item[$iii$)] $B_{k}(\lambda,\mu)=B_{k+1}(\lambda,\mu)\cdot h_{k+1}(\lambda,\mu;\lambda_{k+1} +1)$, for $1\le k\le t-2$;
        \item [$iv$)] $\A{t}\cdot \g{t}{0}=0$.
    \end{enumerate}
\end{lemma}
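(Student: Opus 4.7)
The plan is to dispatch items $(i)$-$(iii)$ by direct inspection of the binomial coefficients involved (they are essentially bookkeeping), and to settle $(iv)$ by a short combinatorial argument using the fact that $\mu$ is a weak composition of $d-t$ with one more part than $\lambda$, which is a weak composition of $d-t+1$.

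For $(i)$, I would just evaluate: $g_1(\lambda,\mu;\lambda_1+1)=\binom{\mu_1}{\lambda_1-(\lambda_1+1)}=\binom{\mu_1}{-1}=0$ by the convention for negative lower index. For the first equation in $(ii)$, observe that $A_{k+1}$ has exactly one more factor in its product than $A_k$, namely the $s=k-1$ term
\[\binom{\sum_{m=1}^{k-1}(\mu_m+\lambda_m)+\mu_k+2(k-1)}{\sum_{m=1}^{k-1}(\mu_m+\lambda_m)+\lambda_k+2(k-1)},\]
and this is exactly $g_k(\lambda,\mu;0)$ by definition. For the second equation in $(ii)$, substituting $z=0$ in $h_k$ and $z=\lambda_{k+1}+1$ in $g_{k+1}$ and simplifying the linear combinations in the upper and lower entries yields the same binomial coefficient $\binom{\sum_{m=1}^{k}(\mu_m+\lambda_m)+\mu_{k+1}+2k}{\sum_{m=1}^{k}(\mu_m+\lambda_m)+2k-1}$ in both cases. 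Part $(iii)$ is the analogous telescoping identity for $B_\bullet$: $B_k/B_{k+1}$ is the $s=k+1$ factor in the defining product of $B_k$, and the substitution $z=\lambda_{k+1}+1$ inside $h_{k+1}$ produces precisely that factor after collecting the $\lambda_{k+1}$ terms.

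The substantive step is $(iv)$. Writing $N_s:=\sum_{m=1}^{s}(\mu_m+\lambda_m)$, the product $A_t(\lambda,\mu)\cdot g_t(\lambda,\mu;0)$ expands as
\[\prod_{s=0}^{t-1}\binom{N_s+\mu_{s+1}+2s}{N_s+\lambda_{s+1}+2s}.\]
Each factor is a binomial coefficient whose upper and lower entries differ by $\mu_{s+1}-\lambda_{s+1}$; hence it vanishes as soon as $\lambda_{s+1}>\mu_{s+1}$. If the whole product were nonzero, then $\mu_j\ge \lambda_j$ for every $j=1,\dots,t$, so that $\sum_{j=1}^{t}\mu_j\ge \sum_{j=1}^{t}\lambda_j=d-t+1$. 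On the other hand $\sum_{j=1}^{t+1}\mu_j=d-t$ with $\mu_{t+1}\ge 0$, which forces $\sum_{j=1}^{t}\mu_j\le d-t<d-t+1$, a contradiction. Thus at least one factor vanishes and $(iv)$ follows.

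I expect no serious obstacle: the only conceptual input is the pigeonhole-type comparison of column sums used in $(iv)$, and the rest is symbol-pushing on binomial indices. Care will be needed to keep the index shifts in $A_k$, $B_k$, $g_k$, and $h_k$ straight — in particular, the offsets $2s$, $2(k-1)$, and $2(k-1)+2$ appearing in the various denominators — but once these are aligned, all four identities collapse to single-step checks.
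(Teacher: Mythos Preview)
Your proposal is correct and follows essentially the same route as the paper: parts $(i)$--$(iii)$ are handled by direct inspection of the binomial indices (the paper writes out the same substitutions), and part $(iv)$ uses the same pigeonhole comparison of $\sum\mu_j$ versus $\sum\lambda_j$. The only cosmetic difference is that you package $A_t\cdot g_t(\lambda,\mu;0)$ as the single product $\prod_{s=0}^{t-1}\binom{N_s+\mu_{s+1}+2s}{N_s+\lambda_{s+1}+2s}$ and argue by contradiction, whereas the paper treats the cases $A_t=0$ and $A_t\neq 0$ separately; the underlying argument is identical.
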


\begin{proof}
    The proof proceeds by explicit computations. Notice that $g_1(\lambda,\mu;\lambda_1+1)=\binom{\mu_1}{-1}=0$. Given $k=1,\dots,t-1$, part $ii)$ follows from the observations

    \thickmuskip=0mu
    \begin{align*}
        &A_{k}(\lambda,\mu)\cdot g_{k}(\lambda,\mu;0)=\prod_{s=0}^{k-2} \binom{\sum_{m=1}^{s} (\mu_m+\lambda_m)+\mu_{s+1}+2s} {\sum_{m=1}^{s} (\mu_m+\lambda_m)+\lambda_{s+1}+2s} \cdot \\
        &\binom{\sum_{m=1}^{k-1} (\mu_m{+}\lambda_m){+}\mu_{k}{+}2(k{-}1)}{\sum_{m=1}^{k-1} (\mu_m{+}\lambda_m){+}\lambda_{k}{+}2(k{-}1)}=\prod_{s=0}^{k-1} \binom{\sum_{m=1}^{s} (\mu_m{+}\lambda_m){+}\mu_{s+1}{+}2s} {\sum_{m=1}^{t} (\mu_m{+}\lambda_m){+}\lambda_{s+1}{+}2s}=A_{k+1}(\lambda,\mu)
    \end{align*}
    \thickmuskip=5mu plus 5mu
    and
    \begin{align*}
        g_{k+1}(\lambda,\mu;\lambda_{k+1} +1)&=\binom{\sum_{m=1}^{k} (\mu_m+\lambda_m)+\mu_{k+1}+2k}{\sum_{m=1}^{k} (\mu_m+\lambda_m)+2k-1}\\
        &=\binom{\sum_{m=1}^{k} (\mu_m+\lambda_m)+\mu_{k+1}+2(k-1)+2}{\sum_{m=1}^{k} (\mu_m+\lambda_m)+2(k-1)+1}=h_k(\lambda,\mu;0).
    \end{align*}
    
    \noindent Furthermore, if $k=1,\dots,t-2$ then
    \begin{align*}
        h_{k+1}(\lambda,\mu;\lambda_{k+1} +1)&= \binom{\sum_{m=1}^{k+1} (\mu_m+\lambda_m)+\mu_{k+2}-\lambda_{k+1}-1+2k+2}{\sum_{m=1}^{k+1} (\mu_m+\lambda_m)+2k+1}\\
        &=\binom{\sum_{m=1}^{k}(\mu_m+\lambda_m) +\mu_{k+1}+\mu_{k+2}+2k+1}{\sum_{m=1}^{k+1} (\mu_m+\lambda_m)+2k+1},
    \end{align*}
    
    \noindent and the equality $B_{k+1}(\lambda,\mu)\cdot h_{k+1}(\lambda,\mu;\lambda_{k+1} +1)=B_k(\lambda, \mu) $ is obtained.

    Finally, if $\A{t}=\prod_{s=0}^{t-2} \binom{\sum_{m=1}^{s} (\mu_m+\lambda_m)+\mu_{s+1}+2s} {\sum_{m=1}^{s} (\mu_m+\lambda_m)+\lambda_{s+1}+2s}\not=0$ then 

    $$\binom{\sum_{m=1}^{s} (\mu_m+\lambda_m)+\mu_{s+1}+2s} {\sum_{m=1}^{s} (\mu_m+\lambda_m)+\lambda_{s+1}+2s}=\binom{\sum_{m=1}^{s} (\mu_m+\lambda_m)+\mu_{s+1}+2s} {\mu_{s+1}-\lambda_{s+1}}\not=0$$

    \noindent for $s=0,\dots, t-2,$ or equivalently $\mu_j\geq\lambda_j$ for $j=1, \dots, t-1$.  Since $\sum_{j=1}^t\lambda_j=d-t+1$ and $\sum_{j=1}^{t+1}\mu_j=d-t$, it follows that $\mu_t-\lambda_t<0$ and thus
    \thickmuskip=0mu 
    $$g_t(\lambda,\mu;0)=\binom{\sum_{m=1}^{t-1} (\mu_m{+}\lambda_m){+}\mu_{t}{+}2(t{-}1)}{\sum_{m=1}^{t-1} (\mu_m{+}\lambda_m){+}\lambda_{t}{+}2(t{-}1)}=\binom{\sum_{m=1}^{t-1} (\mu_m{+}\lambda_m){+}\mu_{t}{+}2(t{-}1)}{\mu_t{-}\lambda_{t}}=0.$$
    \thickmuskip=5mu plus 5mu

    \noindent These observations yield the identity below which completes the proof of part $iv)$.
    \begin{equation*}
        \A{t}\cdot \g{t}{0}=
        \begin{cases}
            \A{t}\cdot 0 & \text{ if } \A{t}\not=0\\
            0\cdot \g{t}{0} & \text{ if } \A{t}=0
        \end{cases}
        \,\,=0. \qedhere
    \end{equation*}
\end{proof}

\begin{lemma}\label{bluecalculation}
    Under the assumptions of \Cref{lemmone}
    \begin{align*}
      &\sum_{k=1}^{t-1}(-1)^{t-k}\A{k}\cdot\B{k}\cdot \Big[\h{k}{\lambda_{k}+1}\cdot \g{k}{\lambda_{k}+1}\\
      &+\g{k}{0}\cdot \h{k}{0}\Big]=-\A{t}\cdot \g{t}{\lambda_{t}+1}.
    \end{align*}
    \end{lemma}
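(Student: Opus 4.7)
The plan is to recognize the expression inside the brackets as two pieces that telescope against each other once the identities from \Cref{lemmone} are applied. I would write $T_k := \A{k}\cdot\B{k}\cdot\h{k}{\lambda_k+1}\cdot\g{k}{\lambda_k+1}$ and $U_k := \A{k}\cdot\B{k}\cdot\g{k}{0}\cdot\h{k}{0}$, so the left-hand side becomes $\sum_{k=1}^{t-1}(-1)^{t-k}(T_k+U_k)$. The goal is then to show that each $U_k$ equals the next $T_{k+1}$ (up to the sign flip from the alternating factor), and to handle the two unpaired boundary terms separately.

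First I would apply part $ii)$ of \Cref{lemmone} twice to the factors of $U_k$: the identity $\A{k}\cdot\g{k}{0}=\A{k+1}$ absorbs the $A$-factor, while $\h{k}{0}=\g{k+1}{\lambda_{k+1}+1}$ rewrites the $h$-factor. For $1\le k\le t-2$, part $iii)$ then converts $\B{k}$ into $\B{k+1}\cdot\h{k+1}{\lambda_{k+1}+1}$. Combining these three substitutions yields $U_k=T_{k+1}$ exactly in the range $1\le k\le t-2$, so in the alternating sum the pair $(-1)^{t-k}U_k + (-1)^{t-(k+1)}T_{k+1}$ cancels. This accounts for all of $S_1 := \sum(-1)^{t-k}T_k$ for $k\ge 2$ and all of $S_2 := \sum(-1)^{t-k}U_k$ for $k\le t-2$.

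Two terms survive. The first is $(-1)^{t-1}T_1$, which vanishes immediately by part $i)$ since $\g{1}{\lambda_1+1}=0$. The second is $(-1)^{t-(t-1)}U_{t-1} = -U_{t-1}$; here $\B{t-1}$ is an empty product and therefore equals $1$, and the two identities from part $ii)$ applied to $k=t-1$ give $U_{t-1}=\A{t}\cdot\g{t}{\lambda_t+1}$. Hence the surviving contribution is $-\A{t}\cdot\g{t}{\lambda_t+1}$, matching the right-hand side.

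The main obstacle I anticipate is purely bookkeeping: tracking the sign from the index shift $k\mapsto k+1$ and verifying that each of the identities from \Cref{lemmone} is applied in the correct range (in particular that $iii)$ covers precisely the range where the telescoping pairs exist, while the boundary $k=t-1$ is handled by the empty-product convention). There is no further combinatorial input needed — all the nontrivial binomial manipulations are already packaged in \Cref{lemmone} — and it is worth noting that part $iv)$ of that lemma is not used here, presumably being reserved for the subsequent chain-map verification.
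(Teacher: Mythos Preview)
Your proposal is correct and follows essentially the same argument as the paper: both proofs use parts $ii)$ and $iii)$ of \Cref{lemmone} to show that the $U_k$ term equals the next $T_{k+1}$ term, producing a telescoping sum whose only survivors are the $k=1$ term (killed by part $i)$) and the $k=t-1$ term (which, via $\B{t-1}=1$ and part $ii)$, yields $-\A{t}\cdot\g{t}{\lambda_t+1}$). Your observation that part $iv)$ is not needed here is also accurate.
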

    \begin{proof}
    For $k=1,\dots, t-2$, \Cref{lemmone} $ii)$ and $iii)$ imply
    \begin{eqnarray*}
    &&\A{k}\cdot\B{k}\cdot \g{k}{0}\cdot \h{k}{0}\\
    &=&\A{k+1}\cdot\B{k+1}\cdot \g{k+1}{\lambda_{k+1}+1}\cdot \h{k+1}{\lambda_{k+1}+1}.
    \end{eqnarray*}
    In the case $k=t-1$, $\B{t-1}=1$ and so by \Cref{lemmone} $ii)$ the following equation holds:
    \begin{equation*}
        \A{t-1}\cdot\B{t-1}\cdot \g{t-1}{0}\cdot \h{t-1}{0}=\A{t}\cdot \g{t}{\lambda_{t}+1}.
    \end{equation*}
    The following chain of equalities finish the proof, where $(*)$ uses \Cref{lemmone} $i)$:
    \begin{align*}
        &\sum_{k=1}^{t-1}(-1)^{t-k}\A{k}\cdot\B{k}\cdot \g{k}{0}\cdot \h{k}{0}\\
        =&\sum_{k=1}^{t-2}(-1)^{t-k}\A{k}\cdot\B{k}\cdot \g{k}{0}\cdot \h{k}{0}\\
        &-\A{t-1}\cdot\B{t-1}\cdot \g{t-1}{0}\cdot \h{t-1}{0}\\
        =&\sum_{k=1}^{t-2}(-1)^{t-k}\A{k+1}\cdot\B{k+1}\cdot \h{k+1}{\lambda_{k+1}+1}\cdot \g{k+1}{\lambda_{k+1}+1}\\
        &-\A{t}\cdot \g{t}{\lambda_{t}+1}\\
        \overset{*}{=}&-\sum_{k=1}^{t-1}(-1)^{t-k}\A{k}\cdot\B{k}\cdot \h{k}{\lambda_{k}+1}\cdot \g{k}{\lambda_{k}+1}\\
        &-\A{t}\cdot \g{t}{\lambda_{t}+1}.\qedhere
    \end{align*}
\end{proof}

We are ready to prove \Cref{main}:

\begin{proof}[Proof of \Cref{main}] 
Consider the map
$\alpha^d_{d-t+1}: C_{d-t+1}(x,1^d) \longrightarrow  C_{d-t+1}(-x-2d,1^d)$ defined as
\begin{eqnarray*}
&&\alpha^d_{d-t+1}(
f_{(\lambda_t,\lambda_{t-1},\dots,\lambda_1)})\\
&=& \sum_{\mu_1+\dots+\mu_t=d-t+1} (-1)^{d-\mu_{t}} \prod_{s=0}^{t-2} \binom{\sum_{j=1}^s (\lambda_j+\mu_j)+\mu_{s+1}+2s}{\sum_{j=1}^s (\lambda_j+\mu_j)+\lambda_{s+1}+2s}f_{(\mu_t,\mu_{t-1},\dots,\mu_1)}
\end{eqnarray*}
which is a homomorphism for $t\geq 1$, presented by a $\binom{d}{t-1}\times \binom{d}{t-1}$ matrix. When there is no ambiguity the superscript $d$ is removed and the map is denoted by $\alpha_{d-t+1}$. To prove $\alpha_\bullet$ is an isomorphism of complexes entails demonstrating $\alpha_\bullet$ is a morphism of chain complexes and each $\alpha_{d-t+1}$ is represented by an invertible matrix. The former is established first, that is
\begin{equation}\label{equofmain}
    \alpha_{d-t}(\partial_{d-t+1}^x(f_{(\lambda_t,\dots, \lambda_1)}))=\partial_{d-t+1}^{-x-2d}(\alpha_{d-t+1}(f_{(\lambda_t, \dots, \lambda_1)}))
\end{equation}
for each $t$ and basis element $f_{(\lambda_t,\dots, \lambda_1)}$ of $C_{d-t+1}(x,1^d)$. \Cref{thirdlemma} addresses the invertibility of $\alpha_{d-t+1}$. Taking $\{f_{(\mu_{t+1}, \dots,  \mu_{1})}\}$ as a basis of $C_{d-t}(-x-2d,1^d)$ results in equalities
\begin{align*}
    \partial_{d-t+1}^{-x-2d}(\alpha_{d-t+1}(f_{(\lambda_t, \dots, \lambda_1)})) &= \sum_{\mu_{1} + \dots+ \mu_{t+1} = d - t}c_{(\mu_{t+1}, \dots , \mu_{1})}f_{(\mu_{t+1}, \dots,  \mu_{1})},\\
    \alpha_{d-t}(\partial_{d-t+1}^x(f_{(\lambda_t,\dots, \lambda_1)})) &= \sum_{\mu_{1} + \dots+ \mu_{t+1} = d - t}\tilde c_{(\mu_{t+1}, \dots , \mu_{1})}f_{(\mu_{t+1}, \dots,  \mu_{1})},
\end{align*}
for some coefficients $c_{(\mu_{t+1}, \dots , \mu_{1})}$ and $\tilde c_{(\mu_{t+1}, \dots , \mu_{1})}$. \Cref{equofmain} is proved by fixing $\mu_{t+1}, \dots, \mu_{1}$ and the following sequence of steps, the details of which constitute the remainder of this proof.
\begin{enumerate}
    \item[Step 1:]Compute $\partial_{d-t+1}^{-x-2d}(\alpha_{d-t+1}(f_{(\lambda_t, \dots, \lambda_1)}))$ and $c_{(\mu_{t+1}, \dots , \mu_{1})}$.
    \item[Step 2:] Compute $\alpha_{d-t}(\partial_{d-t+1}^x(f_{(\lambda_t,\dots, \lambda_1)}))$ and $\tilde c_{(\mu_{t+1}, \dots , \mu_{1})}$.
    \item[Step 3:] Compare the coefficients $c_{(\mu_{t+1}, \dots , \mu_{1})}$ and $\tilde c_{(\mu_{t+1}, \dots , \mu_{1})}$.
\end{enumerate}
\noindent\textbf{Step 1.} Let us first determine the basis elements $f_{(\epsilon_t, \dots , \epsilon_1)}$ of $C_{d-t+1}(-x-2d,1^d)$, such that in the sum for $\partial_{d-t + 1}^{-x-2d}(f_{(\epsilon_t, \dots \epsilon_{1})})$, the basis element $f_{(\mu_{t+1}, \dots , \mu_{1})}$ appears with non-zero coefficient. By definition of $\partial_{d-t+1}^{-x-2d}$ such an $f_{(\epsilon_t, ..., \epsilon_{1})}$ is of the form $$f_{(\mu_{t+1}, ...., \mu_{k+2}, \mu_{k+1} + \mu_{k}+1, \mu_{k-1}, \dots,\mu_{1})}$$ for some $1 \leq k \leq t$.
More concretely, for $k = t$, the coefficient of $f_{(\mu_{t+1}, \dots, \mu_{1})}$ in the sum $\partial_{d-t+1}^{-x-2d}(f_{(\mu_{t+1} + \mu_t + 1, \mu_{t-1}, \dots, \mu_{1})})$ is
\[\binom{-x-2d+\mu_{t+1} + \mu_t + 1}{\mu_t + 1} = (-1)^{\mu_t + 1} \binom{x + 2d - \mu_{t+1}-1}{\mu_t + 1}.\]
For $k < t$, in the sum $\partial_{d-t+1}^{-x-2d}(f_{(\mu_{t+1}, ...., \mu_{k+2}, \mu_{k+1} + \mu_{k}+1, \mu_{k-1}, \dots,\mu_{1})})$ the coefficient of $f_{(\mu_{t+1}, \dots, \mu_{1})}$ is given by
$$(-1)^{t-k}\binom{\mu_{k+1} + \mu_{k} + 2}{\mu_{k}+1}.$$

The coefficient of $f_{(\mu_{t+1}, ...., \mu_{k+2}, \mu_{k+1} + \mu_{k}+1, \mu_{k-1}, \dots,\mu_{1})}$ in the sum for $\alpha_{d-t+1}(f_{(\lambda_{t}, \dots, \lambda_{1})})$ is computed next. 
For $k < t$, the coefficient is given by 
    \[(-1)^{d-\mu_{t+1}}\cdot\prod_{s=k+1}^{t-1} \binom{\sum_{m=1}^{s-1} (\mu_m+\lambda_m)+\mu_{s}+\mu_{s+1}+2s-1}{\sum_{m=1}^{s} (\mu_m+\lambda_m)+2s-1}\]
    \[\prod_{s=0}^{k-2} \binom{\sum_{m=1}^{s} (\mu_m+\lambda_m)+\mu_{s+1}+2s} {\sum_{m=1}^{s} (\mu_m+\lambda_m)+\lambda_{s+1}+2s}\cdot \binom{\sum_{m=1}^{k-1} (\mu_m+\lambda_m)+\mu_{k}+\mu_{k+1}+2k - 1}{\sum_{m=1}^{k-1} (\mu_m+\lambda_m) + \lambda_{k}+2k-2}\]
    \[=(-1)^{d-\mu_{t+1}}\B k \cdot \A{k} \cdot \binom{\sum_{m=1}^{k-1} (\mu_m+\lambda_m)+\mu_{k}+\mu_{k+1}+2k - 1}{\sum_{m=1}^{k-1} (\mu_m+\lambda_m) + \lambda_{k}+2k-2}\]
where $A_k, B_k$ are as defined in Lemma \ref{lemmone}.

The coefficient for $k = t$ is
    \[(-1)^{d-(\mu_{t+1}+\mu_t+1)}\prod_{s=0}^{t-2} \binom{\sum_{m=1}^{s} (\mu_m+\lambda_m)+\mu_{s+1}+2s} {\sum_{m=1}^{s} (\mu_m+\lambda_m)+\lambda_{s+1}+2s}= (-1)^{d - (\mu_{t+1} + \mu_t + 1)}\A t.\]

\noindent Thus \Cref{da} displays the coefficient of $f_{(\mu_{t+1}, \mu_t, \dots, \mu_1)}$ in the composition $\partial_{d-t+1}^{-x-2d}(\alpha_{d-t+1}(f_{\lambda_t, \dots, \lambda_1}))$.

\begin{equation}\label{da}
\begin{gathered}
c_{(\mu_{t+1}, \dots, \mu_1)}=(-1)^{d-\mu_{t+1}}\Bigg[\sum_{k = 1}^{t-1}\Bigg((-1)^{t-k}\binom{\sum_{m=1}^{k-1} (\mu_m+\lambda_m)+\mu_{k}+\mu_{k+1}+2k - 1}{\sum_{m=1}^{k-1} (\mu_m+\lambda_m) + \lambda_{k}+2k-2}\cdot \\
\cdot \binom{\mu_{k+1} + \mu_{k} + 2}{\mu_{k}+1}\cdot \A k \cdot \B k\Bigg) + \binom{x + 2d - \mu_{t+1}-1}{\mu_t + 1}\A t\Bigg].
\end{gathered} 
\end{equation}

\noindent\textbf{Step 2.} By \Cref{def}
\begin{align*}
    \partial^x_{d-t+1}(f_{(\lambda_t,\dots,\lambda_1)})&=\sum_{i=1}^{\lambda_t} \binom{x+\lambda_t}{\lambda_t -i+1} f_{(i-1,\lambda_t-i,\lambda_{t-1},\dots,\lambda_1)}\\
    &+\sum_{k=1}^{t-1}\sum_{i=1}^{\lambda_k} (-1)^{t-k} \binom{\lambda_k +1}{i} f_{(\lambda_t,\dots,\lambda_{k+1},i-1,\lambda_k-i,\lambda_{k-1},\dots,\lambda_1)}.
\end{align*}
\noindent Applying $\alpha_{d-t}$ to the basis elements appearing in the above summation, it follows 
\begin{align*}
    &\alpha_{d-t}(f_{(i-1,\lambda_t-i,\lambda_{t-1},\dots,\lambda_1)})\\
    =&\sum_{\mu_1+\dots+\mu_{t+1}=d-t}(-1)^{d-\mu_{t+1}} \prod_{s=0}^{t-2} \binom{\left(\sum_{t=1}^s \lambda_t+\mu_t\right)+\mu_{s+1}+2s}{\left(\sum_{t=1}^s \lambda_t+\mu_t\right)+\lambda_{s+1}+2s}\cdot\\
    &\cdot\binom{\sum_{m=1}^{t-1} (\mu_m+\lambda_m)+\mu_{t}+2(t-1)}{\sum_{m=1}^{t-1} (\mu_m+\lambda_m)+\lambda_{t}-i+2(t-1)}f_{(\mu_{t+1},\mu_t,\dots,\mu_1)}\\
    =&\sum_{\mu_1+\dots+\mu_{t+1}=d-t}(-1)^{d-\mu_{t+1}}\A t\cdot \g{t}{i}f_{(\mu_{t+1},\mu_t,\dots,\mu_1)},
    \end{align*}
    and
    \begin{align*}
    &\alpha_{d-t}(f_{(\lambda_t,\dots,\lambda_{k+1},i-1,\lambda_k-i,\lambda_{k-1},\dots,\lambda_1)})\\
    =&\sum_{\mu_1+\dots+\mu_{t+1}=d-t} (-1)^{d-\mu_{t+1}}\prod_{s=0}^{k-2} \binom{\left(\sum_{t=1}^s \lambda_t+\mu_t\right)+\mu_{s+1}+2s}{\left(\sum_{t=1}^s \lambda_t+\mu_t\right)+\lambda_{s+1}+2s}\cdot\\
    &\cdot\binom{\sum_{m=1}^{k-1} (\mu_m+\lambda_m)+\mu_{k}+2(k-1)}{\sum_{m=1}^{k-1} (\mu_m+\lambda_m)+\lambda_{k}-i+2(k-1)}\cdot\binom{\sum_{m=1}^{k} (\mu_m+\lambda_m)+\mu_{k+1}-i+2k}{\sum_{m=1}^{k} (\mu_m+\lambda_m)+2k-1}\cdot\\
    &\cdot\prod_{s=k+1}^{t-1} \binom{\sum_{m=1}^{s-1} (\mu_m+\lambda_m)+\mu_{s}+\mu_{s+1}+2s-1}{\sum_{m=1}^{s} (\mu_m+\lambda_m)+2s-1}\,f_{(\mu_{t+1},\mu_t,\dots,\mu_1)}\\
    =&\sum_{\mu_1+\dots+\mu_{t+1}=d-t}(-1)^{d-\mu_{t+1}}\A{k}\cdot \g{k}{i}\cdot \h{k}{i}\cdot \B{k} f_{(\mu_{t+1},\mu_t,\dots,\mu_1)},
\end{align*}
where $\g{k}{-}$, and $\h{k}{-}$ are defined in Lemma \ref{lemmone}. The above computations result in

\begin{align*}
   & (\alpha_{d-t}\circ\partial^x_{d-t+1})(f_{(\lambda_t,\dots,\lambda_1)})
    =\sum_{i=1}^{\lambda_t} \binom{x+\lambda_t}{\lambda_t -i+1} \alpha_{d-t}(f_{(i-1,\lambda_t-i,\lambda_{t-1},\dots,\lambda_1)})\notag\\
    &+\sum_{k=1}^{t-1}\sum_{i=1}^{\lambda_k} (-1)^{t-k} \binom{\lambda_k +1}{i} \alpha_{d-t}(f_{(\lambda_t,\dots,\lambda_{k+1},i-1,\lambda_k-i,\lambda_{k-1},\dots,\lambda_1)})\\
&=\sum_{\mu_1+\dots+\mu_{t+1}=d-t} \tilde c_{(\mu_{t+1}, \dots, \mu_1)}f_{(\mu_{t+1},\mu_t,\dots,\mu_1)},\nonumber
\end{align*}
where
\begin{equation}\label{ad}
\begin{gathered}
     \tilde c_{(\mu_{t+1}, \dots, \mu_1)}=(-1)^{d-\mu_{t+1}}\Bigg[\sum_{i=1}^{\lambda_t} \binom{x+\lambda_t}{\lambda_t -i+1}\A{t}\cdot \g{t}{i}\\
    +\sum_{k=1}^{t-1}(-1)^{t-k}\Bigg(\sum_{i=1}^{\lambda_k}  \binom{\lambda_k +1}{i} \A{k}\cdot \g{k}{i}\cdot \B{k}\cdot \h{k}{i}\Bigg)\Bigg].
    \end{gathered}
\end{equation}

\noindent\textbf{Step 3.} The final step is to exhibit the coefficient $\tilde c_{(\mu_{t+1}, \dots, \mu_1)}$ of $f_{(\mu_{t+1}, \dots,\mu_1)}$ in \Cref{ad} is equal to $c_{(\mu_{t+1}, \dots, \mu_1)}$, i.e. the expression in \Cref{da}. This is proven with the aid of the following combinatorial identities.
\begin{equation}\label{Vi}
    \binom{m+n}{r} = \sum_{k \geq 0}\binom{m}{k}\cdot \binom{n}{r-k} \quad\text{ with $m,n, r\in\N$}.
\end{equation}
\begin{equation} \label{firstlemmone}
        \sum_{j\ge 0} \binom{b}{j} \binom{y+a}{a-b+1+j} \binom{y+a+b+c-j}{c-j} = \binom{y+a+c}{a-b+c+1} \binom{a+c+1}{c}
\end{equation}
where $y$ is a variable and $a,b,c\in \Z_+$.
Indeed, \Cref{Vi} is the well-known Vandermonde's identity and \Cref{firstlemmone} is the content of \cite[Corollary 4]{szekely}. These formulas along with \Cref{lemmone} are used to rewrite the inner sums in $\tilde c_{(\mu_{t+1}, \dots, \mu_1)}$ as having index $i\geq 0$, which are later recognized as a product of binomial coefficients. By the above discussion and the fact that $\binom{x + \lambda_t}{\lambda_t-i + 1} = 0$ for $i > \lambda_t + 1$, 
\thickmuskip=0mu
\begin{align*}
&\sum_{i=1}^{\lambda_t} \binom{x+\lambda_t}{\lambda_t -i+1}\A{t}\cdot \g{t}{i}\\
=&\A{t}\Bigg(\sum_{i\geq0}
\binom{x{+}\lambda_t}{\lambda_t{-}i{+}1} \g{t}{i}-\g{t}{\lambda_t {+}1} \Bigg) {-}\binom{x{+}\lambda_t}{1{+}\lambda_t} \underbrace{\A{t}\g{t}{0}}_{=0 \text{ by \Cref{lemmone} $iv)$}}\\
=&\A{t}\Bigg(\underbrace{\sum_{i\geq 0} \binom{x+\lambda_t}{\lambda_t -i+1}\binom{\sum_{m=1}^{t-1} (\mu_m+\lambda_m)+\mu_{t}+2(t-1)}{\mu_t-\lambda_t +i}}_{=\binom{\sum_{m=1}^{t-1} (\mu_m+\lambda_m)+\mu_{t}+\lambda_t+x+2(t-1)}{\mu_t+1} \text{ by \Cref{Vi}}}-\g{t}{\lambda_t +1}\Bigg)\\
=&\A{t} \Bigg(\binom{x+2d-\mu_{t+1}-1}{\mu_t+1}-\g{t}{\lambda_t +1}\Bigg),
\end{align*}
\thickmuskip=5mu plus 5mu
as $\lambda_1+\dots+\lambda_t=d-t+1=\mu_1+\dots+\mu_{t+1}+1.$  
For the other summation appearing,
\begin{align*}
&\sum_{k=1}^{t-1}(-1)^{t-k}\left[\sum_{i=1}^{\lambda_k}  \binom{\lambda_k +1}{i} \A{k} \g{k}{i} \B{k} \h{k}{i}\right]\\
=&\sum_{k=1}^{t-1}(-1)^{t-k}\A{k}\B{k}\left[\sum_{i\geq 0}  \binom{\lambda_k +1}{i}  \g{k}{i} \h{k}{i} \right. \\
-& \left.  \g{k}{0} \h{k}{0}-\g{k}{\lambda_k +1} \h{k}{\lambda_k +1} \right]\\
=&\sum_{k=1}^{t-1}(-1)^{t-k}\A{k}\B{k}\left[\sum_{i\geq 0}  \binom{\lambda_k +1}{i}  \g{k}{i} \h{k}{i}\right ]\\
{-}& \sum_{k=1}^{t-1}(-1)^{t-k}\A{k}\B{k}\left[ \g{k}{0} \h{k}{0}{+}\g{k}{\lambda_k{+}1} \h{k}{\lambda_k{+}1} \right]\\
=& \sum_{k=1}^{t-1}(-1)^{t-k}\A{k}\B{k}\left[\sum_{i\geq 0}  \binom{\lambda_k+1}{i}  \g{k}{i} \h{k}{i}\right]\\
+&\A{t} \g{t}{\lambda_{t}+1}
\end{align*}
\thickmuskip=5mu plus 5mu
where the last equality follows by \Cref{bluecalculation}. Moreover, by applying \Cref{firstlemmone} with   \thickmuskip=0mu $y=\sum_{m=1}^{k-1} (\mu_m+\lambda_m)+2(k-1)$, $a=\mu_{k}$, $b=\lambda_{k}+1$ and $c=\mu_{k+1}+1$,  we obtain
$$\sum_{i\geq 0}  \binom{\lambda_k {+}1}{i}  \g{k}{i}\cdot \h{k}{i}=\sum_{i\geq 0} \binom{\lambda_k {+}1}{i} \binom{\sum_{m=1}^{k-1} (\mu_m{+}\lambda_m)+2(k-1) {+}\mu_{k}}{\mu_k{-}\lambda_{k}{+}i}$$

\begin{align*}
    &\cdot\binom{\sum_{m=1}^{k-1} (\mu_m{+}\lambda_m)+2(k-1)+\mu_k+(\lambda_k+1)+(\mu_{k+1}+1)-i}{\mu_{k+1}+1-i}\\
    &=\binom{\sum_{m=1}^{k-1} (\mu_m+\lambda_m)+\mu_{k}+\mu_{k+1}+2k-1}{\mu_k-\lambda_{k}+\mu_{k+1}+1} \cdot\binom{\mu_k+\mu_{k+1}+2}{\mu_{k+1}+1}.
\end{align*}

\noindent It follows that
\begin{eqnarray*}
&&\sum_{k=1}^{t-1}(-1)^{t-k}\Big(\sum_{i=1}^{\lambda_k}  \binom{\lambda_k +1}{i} \A{k}\cdot \g{k}{i}\cdot \B{k}\cdot \h{k}{i}\Big)\\
&=& \sum_{k=1}^{t-1}(-1)^{t-k}\A{k}\cdot\B{k}\Bigg(\binom{\sum_{m=1}^{k-1} (\mu_m+\lambda_m)+\mu_{k}+\mu_{k+1}+2k-1}{\mu_k-\lambda_{k}+\mu_{k+1}+1}\\
&&\cdot\binom{\mu_k+\mu_{k+1}+2}{\mu_{k+1}+1}\Bigg)+\A{t}\cdot \g{t}{\lambda_{t}+1}
\end{eqnarray*}
\noindent and the claimed equality below concludes that $\alpha_\bullet$ is a morphism of chain complexes.
\[\tilde c_{(\mu_{t+1}, \dots, \mu_1)}=(-1)^{d-\mu_{t+1}}\Big[\A{t}\binom{x{+}2d{-}\mu_{t+1}{-}1}{\mu_t+1}{+}\sum_{k=1}^{t-1}(-1)^{t-k}\A{k}\B{k}\]
\[\cdot\binom{\sum_{m=1}^{k-1} (\mu_m+\lambda_m)+\mu_{k}+\mu_{k+1}+2k-1}{\mu_k-\lambda_{k}+\mu_{k+1}+1}\binom{\mu_k+\mu_{k+1}+2}{\mu_{k+1}+1}\Big]=c_{(\mu_{t+1}, \dots, \mu_1)}. \qedhere\]
\end{proof}

The following remark introduces an order on the basis of $\displaystyle C_k(\underline w)$ to facilitate understanding of the matrices corresponding to the maps constructed in \Cref{main}.

\begin{remark}\label{basisrmk}
Let $t, k \in \N$, $t>0$.  Define an order $\order$ on the set of weak compositions of $k$ with $t$ parts
$$S_{t,k}=\left\{(a_t, a_{t-1}\dots, a_1)\in\N^t\mid \sum_{i=1}^ta_i=k\right\}$$ 
as follows: for  $a=(a_t,\ldots,a_1)$, $b=(b_t,\ldots,b_1)\in S_{t,k}$, 
consider the associated monomials $x_1^{a_1}\cdots x_t^{a_t}$ and $x_1^{b_1}\cdots x_t^{b_t}$. Define
$a\order b$ if $x_1^{a_1}\cdots x_t^{a_t} \geq_{\text{RLex}} x_1^{b_1}\cdots x_t^{b_t}$ with respect to the reverse lexicographic order on the monomials. In other words,
$a\order b$ if $a=b$ or the first non-zero entry of the vector of integers $a-b=(a_t-b_t,\ldots,a_1-b_1)$ is negative.

For each $d \in \N$ and for each $\underline w=(w_0,\dots,w_d)\in R\times \N^{d-1}$, denote the basis of $C_{d-t+1}(\underline w)$ by
$$\BC(C_{d-t+1}(w_0, w_1, \dots, w_d))=\{ f_{a^{\ugly{1}}},f_{a^\ugly 2},\dots\mid a^\ugly i\in S_{t, d-t+1}\},$$ where superscript $[i]$ denotes the $i$-th basis element under the ordering $a^\ugly i\order a^\ugly{i+1}$ for all $i\geq 1$. For example,Robinson-Schensted correspondence. The article contains a section treating the basic results about the passage to initial ideals and algebras. $a^\ugly1=(0,\dots, 0, d-t+1)$ is the biggest element in $S_{t,d-t+1}$ and $a^\ugly{\binom{d}{t-1}}=(d-t+1, 0,\dots, 0)$ is the smallest one. 
Fix $w_0$ and let $d=4$ and $t=3$. The basis of $C_2(w_0,\dots, w_4)$ is ordered as 
$$\BC(C_2(w_0,\dots, w_4))=\{f_{(0,0,2)}, f_{(0,1,1)}, f_{(0,2,0)}, f_{(1,0,1)}, f_{(1,1,0)}, f_{(2,0,0)}\}.$$
In particular, denote the bases of $C_{d-t+1}(x,1^d)$ and $C_{d-t+1}(-x-2d,1^d)$ by 
\begin{eqnarray*}
    \BC(C_{d-t+1}(x,1^d))&=&\{ f_{\lambda^{\ugly{1}}},f_{\lambda^\ugly 2},\dots \mid \lambda^\ugly i\in S_{t, d-t+1}\}, \\ 
    \BC(C_{d-t+1}(-x-2d,1^d))&=&\{ f_{\mu^\ugly 1},f_{\mu^\ugly 2},\dots \mid \mu^\ugly j\in S_{t, d-t+1}\}.    
\end{eqnarray*}

Since $\lambda^\ugly i$ and $\mu^\ugly j$ are in $S_{t, d-t+1}$ for each $i,j\geq 1$, they are comparable and
$\lambda^\ugly i=\mu^\ugly j$ if and only if $i= j$. Whence $\lambda^\ugly i\order\mu^\ugly j$ if and only if $i\leq j$. Moreover for the map $\alpha^{}_{d-t+1}$ constructed in \Cref{main},
the $(i,j)$-th entry of the matrix of $\alpha^{}_{d-t+1}$ with respect to the above order is ${\aij ij=[\alpha_{d-t+1}]_{(f_{\lambda^{\ugly i}}, f_{\mu^{\ugly j}})}}$ for each $1 \leq i,j\leq \binom{d}{t-1}$, that is,
\begin{equation*}
    \alpha^{}_{d-t+1}=
    \begin{bmatrix}
        \aij11 & \aij21&\dots & \aij i1& \dots\\
        \aij12& \aij22&\dots &\aij i2& \dots\\
        \vdots &\vdots&&\vdots&\\
        \aij 1j&\aij 2j&\dots&\aij ij&\dots\\
        \vdots &\vdots&&\vdots&\\
    \end{bmatrix}.
\end{equation*}
\end{remark}

\begin{lemma}\label{thirdlemma}
With respect to the basis order introduced in \Cref{basisrmk}, the matrices corresponding to the maps $\alpha_{d-t+1}$, $1\le t\le d+1$, constructed in \Cref{main} are upper triangular, with diagonals made up of $1$'s and $-1$'s, and hence are invertible.
\end{lemma}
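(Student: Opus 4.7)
The plan is to read the $(i,j)$-entry of the matrix of $\alpha_{d-t+1}$ directly from its defining formula and compare the condition for non-vanishing with the order $\order$ introduced in \Cref{basisrmk}. Fix weak compositions $\lambda = (\lambda_t, \ldots, \lambda_1)$ and $\mu = (\mu_t, \ldots, \mu_1)$ in $S_{t, d-t+1}$; I examine when the coefficient of $f_{\mu}$ in $\alpha_{d-t+1}(f_\lambda)$ is nonzero.

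First I will analyze each factor
\[
\binom{N_s}{K_s}, \qquad N_s = \sum_{j=1}^s(\lambda_j+\mu_j)+\mu_{s+1}+2s, \qquad K_s = \sum_{j=1}^s(\lambda_j+\mu_j)+\lambda_{s+1}+2s,
\]
appearing in the product that defines $\alpha_{d-t+1}(f_\lambda)$. Both $N_s$ and $K_s$ are non-negative integers (the variable $x$ does not appear here), and $N_s - K_s = \mu_{s+1}-\lambda_{s+1}$, so the binomial is nonzero exactly when $\mu_{s+1}\geq\lambda_{s+1}$. Hence the full coefficient vanishes unless $\mu_j\geq\lambda_j$ for every $j=1,\dots,t-1$.

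Next I combine these inequalities with the shared sum condition $\sum_j \mu_j = \sum_j \lambda_j = d-t+1$ to obtain $\mu_t \leq \lambda_t$, with equality forcing $\mu = \lambda$. Unpacking the definition of $\order$, whose first comparison is between the $t$-th coordinates, this means exactly that nonzero coefficients occur only when $\mu \order \lambda$. Because the indexing of \Cref{basisrmk} assigns smaller indices to larger elements under $\order$, this translates precisely into the matrix having zero entries strictly below the diagonal.

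For the diagonal entries I set $\mu = \lambda$: every binomial collapses to $\binom{K_s}{K_s}=1$, so the product is $1$, and the overall coefficient reduces to the sign $(-1)^{d-\lambda_t}\in\{+1,-1\}$. Thus the matrix is upper triangular with $\pm 1$ along the diagonal, its determinant is $\pm 1$, and $\alpha_{d-t+1}$ is invertible. The argument is essentially a direct calculation rather than a deep one; the only place requiring care is aligning the ``first non-zero entry of $\mu-\lambda$'' convention defining $\order$ with the output of the vanishing analysis, and keeping straight which of $i,j$ indexes rows versus columns in the display of \Cref{basisrmk}.
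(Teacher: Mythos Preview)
Your proof is correct and essentially identical to the paper's own argument: both compute $N_s - K_s = \mu_{s+1}-\lambda_{s+1}$ to see that each binomial factor is nonzero precisely when $\mu_{s+1}\geq\lambda_{s+1}$, then use the common sum $d-t+1$ to force $\mu_t\leq\lambda_t$ (with equality only when $\mu=\lambda$), and finally read off the sign $(-1)^{d-\mu_t}$ on the diagonal. The only cosmetic difference is that the paper rewrites the binomial as $\binom{N_s}{\mu_{s+1}-\lambda_{s+1}}$ before invoking non-negativity, whereas you compare $N_s$ and $K_s$ directly.
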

\begin{proof}
Let $\lambda=(\lambda_t,\lambda_{t-1},\dots,\lambda_1)$ and $\mu=(\mu_t,\mu_{t-1},\dots,\mu_1)$ be compositions such that $f_\lambda\in \BC(C_{d-t+1}(x,1^d))$  and $f_\mu\in\BC(C_{d-t+1}(-x-2d,1^d))$. 
Observe that
$$[\alpha_{d-t+1}]_{(f_{\lambda}, f_{\mu})}=
(-1)^{d-\mu_{t}} \prod_{s=0}^{t-2} \binom{\sum_{j=1}^s (\lambda_j+\mu_j)+\mu_{s+1}+2s}{\sum_{j=1}^s (\lambda_j+\mu_j)+\lambda_{s+1}+2s}\not=0$$
if and only if $\binom{\sum_{j=1}^s (\lambda_j+\mu_j)+\mu_{s+1}+2s}{\mu_{s+1}-\lambda_{s+1}}\not=0$ for $s=0,\dots, t-2$. This is equivalent to 
$\mu_j \geq \lambda_j $ for $j=1,\dots,t-1$, which means 
\begin{equation}\label{equation1}
    \lambda=\mu\text{\quad or \quad} 
    \begin{cases}
     \mu_j \geq \lambda_j  & \text{for each } 1\leq j\leq t-1,\\
    \mu_j > \lambda_j
    &\text{for some } 1\leq j\leq t-1,\text{ and} \\
    \mu_t< \lambda_t
    \end{cases}
\end{equation}
as $\sum_{i=1}^t\lambda_i=\sum_{i=1}^t\mu_i$.
Hence by the definition of $\order$, $[\alpha_{d-t+1}]_{(f_{\lambda}, f_{\mu})}\not=0$ implies $\nonzero$. Consequently
\begin{equation*}
    [\alpha_{d-t+1}]_{(f_{\lambda}, f_{\mu})}=
    \begin{cases}
    0 & \text{if } \lambda\order \mu\text{ and } \mu\not = \lambda,\\
    (-1)^{d-\mu_t} & \text{if } \mu= \lambda.
    \end{cases}
\end{equation*}
Thus for $f_{\lambda^{\ugly i}}$ 
and $f_{\mu^{\ugly j}}$, 
the identity 
\begin{equation*}
    [\alpha_{d-t+1}]_{(f_{\lambda^{\ugly i}}f_{\mu^{\ugly j}})}=
    \begin{cases}
    0 & \text{if } i<j,\\
    1 \text{ or }-1  & \text{if } i=j
    \end{cases}
\end{equation*}
proves $\alpha_{d-t+1}$ is upper triangular, with diagonals made up of $1$'s and $-1$'s.
\end{proof}

\begin{corollary}\label{uselesscorollary}
Using the basis order introduced in \Cref{basisrmk}, and for each integer $ 1 \leq t \leq d+1 $, the matrices corresponding to the maps constructed in \Cref{main} are of the form
\begin{equation*}
        \alpha^d_{d-t+1}=\left(
        \begin{array}{c|c}
          (-1)^d\mathbbm{1}_{\binom{d-1}{t-2}}    &  \gamma_{d-t+1}^d \Tstrut\Bstrut\\
        \hline
           \bigzero  & \alpha^{d-1}_{d-t} \Tstrut\Bstrut
        \end{array}
        \right)
    \end{equation*}
with $\gamma_{d-t+1}^d$ a $\binom{d-1}{t-2} \times \binom{d-1}{t-1}$ matrix, and $\mathbbm{1}_{\binom{d-1}{t-2}}$ is the  identity matrix of size ${\binom{d-1}{t-2}}$.
\end{corollary}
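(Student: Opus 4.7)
The plan is to reveal the claimed block decomposition by partitioning the ordered basis of $C_{d-t+1}$ according to whether $\lambda_t=0$ or $\lambda_t\geq 1$. In the reverse lex order $\succeq$ a smaller entry $\lambda_t$ makes a composition larger, so the compositions with $\lambda_t=0$ form an initial segment; these are in bijection with weak compositions of $d-t+1$ into $t-1$ parts, hence there are $\binom{d-1}{t-2}$ of them. The remaining $\binom{d-1}{t-1}$ basis elements satisfy $\lambda_t\geq 1$. I would perform the same split on source and target of $\alpha^d_{d-t+1}$.

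The bottom-left zero block would then be immediate from \Cref{thirdlemma}: if a column index $i$ lies in the first segment and a row index $j$ in the second, then necessarily $j>i$, so the entry vanishes by upper triangularity. For the top-left block, I would specialize \Cref{equation1} from the proof of \Cref{thirdlemma} to the case $\lambda_t=\mu_t=0$: a non-zero entry off the diagonal would require $\mu_t<\lambda_t$, which is impossible here. Only diagonal entries survive, each equal to $(-1)^{d-\mu_t}=(-1)^d$, yielding the scaled identity block $(-1)^d\mathbbm{1}_{\binom{d-1}{t-2}}$.

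The bottom-right block is the substantive step. I would use the shift $(\lambda_t,\lambda_{t-1},\ldots,\lambda_1)\mapsto(\lambda_t-1,\lambda_{t-1},\ldots,\lambda_1)$ as a bijection from the subset of $S_{t,d-t+1}$ with $\lambda_t\geq 1$ onto $S_{t,d-t}$, which indexes the basis of $C_{d-t}(x,1^{d-1})$. Since the difference vector of any two compositions is unaffected by this shift, the bijection preserves $\succeq$, so it carries the ordering of the second segment to the natural ordering used to present $\alpha^{d-1}_{d-t}$. Matching matrix entries then reduces to the identity $(-1)^{d-\mu_t}=(-1)^{(d-1)-(\mu_t-1)}$ together with the observation that the binomial product in the defining formula of $\alpha^d_{d-t+1}(f_\lambda)$ only involves $\lambda_j,\mu_j$ for $1\leq j\leq t-1$, which are fixed by the shift. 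The main obstacle is just identifying this basis partition; once it is in place, the formulas align mechanically.
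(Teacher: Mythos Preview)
Your proposal is correct and follows essentially the same approach as the paper: the paper likewise splits the basis according to whether the $t$-th part vanishes, uses \Cref{equation1} from the proof of \Cref{thirdlemma} to handle the left columns (giving both the zero block and the scaled identity in one stroke, where you separate them via upper triangularity and then \Cref{equation1}), and matches the bottom-right block entrywise via the same shift $(\lambda_t,\ldots,\lambda_1)\mapsto(\lambda_t-1,\ldots,\lambda_1)$ and the sign identity $(-1)^{d-\mu_t}=(-1)^{(d-1)-(\mu_t-1)}$. Your explicit remark that the shift preserves $\succeq$ is a detail the paper leaves implicit but is indeed needed for the block to literally equal $\alpha^{d-1}_{d-t}$ in the given ordering.
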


\begin{remark}{\label{mapconeremark}}
The blocks on the left and right correspond to the basis elements $f_{\lambda}$ in the domain with $\lambda_t = 0$ and $\lambda_t \neq 0$ respectively. Likewise, the blocks on top and bottom correspond to the basis elements $f_{\mu}$ in the codomain satisfying $\mu_t = 0$ and $\mu_t \neq 0$ respectively. 
This decomposition reflects the fact that $C_{\bullet}(x, 1^d)$ is the mapping cone of a morphism $\phi_x: C_{\bullet}(x+1, 1^{d-1}) \longrightarrow C_{\bullet}(1^d)$ as described in \cite{raicukeller}. There the proof is for $w_0$ an integer, but it extends immediately to the case $w_0$ a polynomial (checking this requires showing the commutativity of some diagrams, which if true for infinitely many specializations to $w_0$ an integer must also be true for $w_0$ a polynomial). As a basic property of mapping cones, the fact that $\alpha_{\bullet}^d$ is a morphism of complexes means that the following diagram commutes up to homotopy given by $\gamma_{\bullet}$:
$$\begin{tikzcd}
    C_{\bullet}(x + 1, 1^{d-1}) \arrow[r, "\alpha^{d-1}_{\bullet}"] \arrow[d, "\phi_x"] & C_{\bullet}(-x -2d+1, 1^{d-1}) \arrow[d, "\phi_{-x-2d}"]\\
    C_{\bullet}(1^d) \arrow[r, "(-1)^d"] & C_{\bullet}(1^d)
\end{tikzcd}$$
Note that $\alpha_{\bullet}^{d-1}: C_{\bullet}(x+1, 1^{d-1}) \to C_{\bullet}(-x-2d+1, 1^{d-1})$ makes sense as the formula for $\alpha_{\bullet}^{d-1}$ is independent of $x$.

Eventually, using induction on $d$, \Cref{uselesscorollary} implies that $\alpha^d_k$ has determinant $\pm 1$ giving a second proof of its invertibility.
\end{remark}

\begin{proof}[Proof of \Cref{uselesscorollary}]
Let $\alpha^{d-1}_{d-t}: C_{d-t}(x,1^{d-1})\longrightarrow C_{d-t}(-x-2d+2,1^{d-1})$ be as defined in Theorem \ref{main}. For $f_{\lambda}, f_{\mu}$ satisfying $\lambda_t, \mu_t \geq 1$, by the definition of $\alpha_{\bullet}^d$

\begin{align*}
&[\alpha^d_{d-t+1}]_{(f_{\lambda}, f_{\mu})}=\underbrace{(-1)^{d-\mu_{t}}}_{(-1)^{(d-1)-(\mu_{t}-1)} } \prod_{s=0}^{t-2} \binom{\sum_{j=1}^s (\lambda_j+\mu_j)+\mu_{s+1}+2s}{\sum_{j=1}^s (\lambda_j+\mu_j)+\lambda_{s+1}+2s} \\
&= [\alpha^{d-1}_{d-t}]_{(f_{(\lambda_t-1, \lambda_{t-1}, ... ,\lambda_1)}, f_{(\mu_t - 1, \mu_{t-1}, ... , \mu_1)})}
\end{align*}
Thus, the lower right block of $\alpha_{d-t+1}^d$ is indeed $\alpha_{d-t}^{d-1}$ as desired.

If $\nonzero$ and $\lambda_t=0$, then $[\alpha_{d-t+1}]_{(f_{\lambda}, f_{\mu})}\not=0$ if and only if $\lambda=\mu$ by \Cref{equation1}. In this case \begin{equation*}
    [\alpha^d_{d-t+1}]_{(f_{\lambda}, f_{\mu})}=
    \begin{cases}
        (-1)^{d-0} \prod_{s=0}^{t-2} \binom{\sum_{j=1}^s (\mu_j+\mu_j)+\mu_{s+1}+2s}{\sum_{j=1}^s (\mu_j+\mu_j)+\mu_{s+1}+2s} =(-1)^{d} & \text{ if }\lambda=\mu,\\
         0 & \text{ if }\lambda\not=\mu.
    \end{cases}   
\end{equation*}
Moreover, notice that  there are $\binom{d-1}{t-2}$-many $(\mu_t,\dots,\mu_1)$'s in $ S_{t, d-t+1}$ such that $\mu_t=0$. It follows that the upper left corner is $(-1)^d\mathbbm{1}_{\binom{d-1}{t-2}}$ and the lower left corner is $\bigzero$ as desired.
\end{proof}

\section{Applications and Future Work}\label{applications}
This section explains the relationship between stable sheaf cohomology for $\PP^n({\bf k})$ and $\PP^n(\Z)$ and the homology groups of $C_{\bullet}(\underline{w})$ as well as a streamlined proof of \cite[Theorem 6.12]{raicukeller} via \Cref{main}.
Recall, as given in \cite{raicukeller}, the connection between $C_{\bullet}(\underline{w})$ and stable sheaf cohomology of ribbon and two column partition Schur functors applied to $\Omega$, the cotangent sheaf of projective space. The case $w_0 \geq 1$ corresponds to a ribbon, $\lambda/\mu$, which is a skew shape that contains no $2\times2$ squares and satisfies $\mu_{i} < \lambda_{i+1}$ whenever $\lambda_{i+1} > 0$. The columns of a ribbon yield a composition $\underline{w} = (w_0,\dots, w_d)$. The following figure is an example of a ribbon shape where $\mu = (3, 2, 2)$, $\lambda = (4,4,3,3)$ and $\underline{w} = (1, 1, 3, 2)$.

\begin{figure}[H]
    \centering
    \[\ydiagram{3+1, 2+2, 2+1, 3}\]
    \label{ribbon_shape}
\end{figure}

\noindent In \cite[Theorem 5.4]{raicukeller} it is proven that the stable cohomology of $\mathbb{S}_{\lambda/\mu}\Omega$ is the same as the homology of $C_\bullet^{\bf k} (\underline{w})$ with an appropriate shift. 

Additionally, in \cite[Theorem 5.7(2)]{raicukeller} it is shown for $\lambda$ a two column partition, i.e. the conjugate partition $\lambda' = (m, d)$ with integers $m,d\geq 0$, the stable cohomology groups of $\mathbb{S}_{\lambda}\Omega$ are given by the homology groups of $C_{\bullet}^{\bf k}(-m-d-1, 1^d)$ up to a shift. Notice the first weight is negative, whereas in the previous case all of the weights are positive. Further, they demonstrate the ranks of the homology groups of $C_{\bullet}^{\bf k}(-m-d-1, 1^d)$ and $C_{\bullet}^{\bf k}(m-d+1, 1^d)$ are the same via recursive formulas for the ranks and induction, motivating the conjecture that forms the basis of this paper. \Cref{main} gives a direct proof of this identification, and shows moreover that for fixed $d$ this identification is uniform with respect to these complexes.
\begin{theorem}[Raicu-VandeBogert]\label{sheaf}
Let $\lambda$ be a partition with two columns, i.e. the conjugate partition is of the form $\lambda' = (m, d)$ for some integers $m,d \geq 0$. Then for all $i$,
$$H_{st}^i(\mathbb{S}_{\lambda}\Omega) = H_{st}^{2m+1-i}(\mathbb{S}_{(d+1, 1^{m-d})}\Omega).$$
\end{theorem}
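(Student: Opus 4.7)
The plan is to deduce \Cref{sheaf} from \Cref{main} combined with the Raicu--VandeBogert identifications recalled in this section. First I would translate both sides of the asserted identity into statements about homology of arithmetic complexes. The hook $(d+1, 1^{m-d})$ is a ribbon whose column composition is $\underline{w} = (m-d+1, 1^d)$ of total weight $m+1$; for such a ribbon the identification of \cite[Theorem 5.4]{raicukeller} yields
\[H_{st}^{2m+1-i}\!\left(\mathbb{S}_{(d+1, 1^{m-d})}\Omega\right) = H_{(m+1)-(2m+1-i)}\!\left(C_\bullet^{{\bf k}}(m-d+1, 1^d)\right) = H_{i-m}\!\left(C_\bullet^{{\bf k}}(m-d+1, 1^d)\right).\]
In parallel, for the two-column partition $\lambda$ with $\lambda' = (m,d)$, \cite[Theorem 5.7(2)]{raicukeller} expresses $H_{st}^i(\mathbb{S}_\lambda\Omega)$ as the homology of $C_\bullet^{{\bf k}}(-m-d-1, 1^d)$ in a specific degree; the expectation is that this degree is again $i-m$ (over a field the dual appearing in \Cref{sec}(iii) does not change dimensions, so the shift $d+m-i$ there for $\widecheck{C_\bullet}[-d]$ becomes $i-m$ for $C_\bullet$ itself).

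The core step is then \Cref{main} applied with the substitution $x = -m-d-1$, for which $-x-2d = m-d+1$. This furnishes an isomorphism of complexes of $R$-modules
\[C_\bullet(-m-d-1, 1^d) \;\cong\; C_\bullet(m-d+1, 1^d),\]
and composing with any evaluation map $R \to \mathbb{Z} \to {\bf k}$ preserves this isomorphism, so the homologies of the two ${\bf k}$-complexes agree in every degree. Assembling this with the two identifications from the previous paragraph gives
\[H_{st}^i(\mathbb{S}_\lambda\Omega) = H_{i-m}\!\left(C_\bullet^{{\bf k}}(-m-d-1,1^d)\right) = H_{i-m}\!\left(C_\bullet^{{\bf k}}(m-d+1,1^d)\right) = H_{st}^{2m+1-i}\!\left(\mathbb{S}_{(d+1, 1^{m-d})}\Omega\right),\]
which is precisely the claim.

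The main obstacle here is not the isomorphism of complexes itself -- that is exactly what \Cref{main} delivers -- but the shift bookkeeping, namely verifying that the Raicu--VandeBogert identification for the two-column case produces the same homological degree $i-m$ as is produced on the ribbon side, so that the cohomological indices match the asserted relation $i \leftrightarrow 2m+1-i$. Once that accounting is pinned down, the proof is formal; this is the sense in which \Cref{main} ``streamlines'' the argument of \cite{raicukeller}, which previously required a separate induction on the ranks of the homology groups of $C_\bullet^{{\bf k}}(-m-d-1,1^d)$ and $C_\bullet^{{\bf k}}(m-d+1,1^d)$. Finally, because the isomorphism of \Cref{main} is defined over the ring $R$ of integer-valued polynomials, the identification of sheaf cohomology groups is uniform in $m$ for fixed $d$, as emphasised in the discussion preceding the theorem.
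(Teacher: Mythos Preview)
Your proposal is correct and follows essentially the same approach as the paper: translate both sides via \cite[Theorem 5.4]{raicukeller} and \cite[Theorem 5.7(2)]{raicukeller} into homologies of arithmetic complexes in degree $i-m$, then invoke \Cref{main}. The only cosmetic difference is that the paper specialises $x$ to $m-d+1$ (so that $-x-2d = -m-d-1$) rather than your $x=-m-d-1$, and it cites \cite[Theorem 5.7(2)]{raicukeller} directly for the degree $i-m$ on the two-column side rather than routing through the dual as in \Cref{sec}(iii); both choices are equivalent.
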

\begin{proof}
The composition associated to the hook $\mu = (d+1, 1^{m-d})$ is $w = (m-d+1, 1^d)$. If $x$ is evaluated to be $m - d+ 1$ in \Cref{main}, then
$$H_{st}^{i}(\mathbb{S}_{\lambda}\Omega) \stackrel{\text{\cite[Theorem 5.7(2)]{raicukeller}}}{=}H_{i-m}(C_{\bullet}(-m-d-1, 1^d)\otimes_R {\bf k})$$
\begin{equation*}
    \stackrel{\text{\Cref{main}}}{=} H_{i-m}(C_{\bullet}(m-d+1, 1^d)\otimes_R {\bf k}) \stackrel{\text{\cite[Theorem 5.4]{raicukeller}}}{=} H_{st}^{2m+1-i}(\mathbb{S}_{(d+1, 1^{m-d})}\Omega).\qedhere
\end{equation*}
\end{proof}

\Cref{main} also gives a uniform identification for projective space over the integers, which is the content of \Cref{sec}.

For the reader's convenience, we recall the statement of \Cref{sec} here.

\begin{theoremSec}
    Let $\Omega$ be the cotangent sheaf over $\mathbb{P}^{n}(\mathbb{Z})$ and $\mathbb{S}_{\lambda/ \mu}$ be a skew Schur functor. The following is true for all $i$.
    \begin{enumerate}
    \item[$i)$] $H^i(\mathbb{P}^n(\Z), \mathbb{S}_{\lambda/\mu} \Omega)$ is independent of $n$ for $n \gg 0$, and is denoted by\\ $H^i_{st}(\mathbb{S}_{\lambda/\mu} \Omega)$.
    \item[$ii)$] If $\mathbb{S}_{\lambda/\mu}$ is a ribbon Schur functor corresponding to  $(w_0, ... , w_d)$, then 
    \[H^i_{st}(\mathbb{S}_{\lambda/\mu} \Omega) = H_{|w|-i}(C_{\bullet}(\underline{w})\otimes_R \mathbb{Z}).\]
    \item[$iii)$] If $\lambda$ is a two column partition, i.e. the conjugate partition for $\lambda$ is of the form $\lambda' = (m, d)$ for integers $m,d \geq 0$, then 
    \[H_{st}^i(\mathbb{S}_{\lambda} \Omega) = H_{d+m-i}(\widecheck{C_{\bullet}}(-m-d-1,1^d)[-d]\otimes_R \mathbb{Z}),\]
    where $\widecheck{C_{\bullet}}(-m-d-1,1^d)$ denotes the dual of the complex $C_{\bullet}(-m-d-1,1^d)$.
    \item[$iv)$] Consider $\lambda$ as in $iii)$ and let $d \geq 1$, then
    \[H_{st}^i(\mathbb{S}_{\lambda} \Omega) = H_{st}^{2m+2-i}(\mathbb{S}_{(m-d+1, 1^d)}\Omega).\]
    \end{enumerate}
\end{theoremSec}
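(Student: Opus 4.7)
The four parts of the theorem split naturally into two groups. Parts (i)--(iii) are integral analogues of results of Raicu--VandeBogert \cite[Section 5]{raicukeller}, while (iv) will follow by combining \Cref{main} with (ii) and (iii), in the spirit of the proof of \Cref{sheaf}.

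For parts $i)$, $ii)$, and $iii)$, the plan is to verify that the arguments of \cite[Theorem 5.4, Theorem 5.7(2)]{raicukeller} extend essentially verbatim from an algebraically closed field to $\mathbb{Z}$. The three main ingredients are: the universal Schur complex resolution of $\mathbb{S}_{\lambda/\mu}V$ from \cite{akinbuchsbaumweyman}, which is defined over any commutative ring; the cohomology of $\mathcal{O}(k)$ on $\mathbb{P}^n_{\mathbb{Z}}$, a free abelian group of the usual rank that stabilizes as $n \to \infty$; and the mapping cone / spectral sequence arguments comparing a universal Schur complex to $C_\bullet(\underline w)$, all of which are functorial and hence commute with base change. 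With these in hand, $i)$ is the statement that the line bundles appearing in the resolution of $\mathbb{S}_{\lambda/\mu}\Omega$ have cohomology that stabilizes in $n$; $ii)$ matches the universal resolution with $C_\bullet(\underline w) \otimes_R \mathbb{Z}$ in the ribbon case; and $iii)$ is the two-column analogue, where the negative first weight $w_0 = -m-d-1$ and the line bundles with negative twists produce the dualization and the shift $[-d]$ on the complex side.

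For $iv)$, I would imitate the proof of \Cref{sheaf} in the integer setting. Starting from $iii)$, namely
\[H^i_{st}(\mathbb{S}_\lambda \Omega) = H_{d+m-i}\bigl(\widecheck{C_\bullet}(-m-d-1, 1^d)[-d] \otimes_R \mathbb{Z}\bigr),\]
I would apply \Cref{main} under the evaluation $x \mapsto -m-d-1$ (so that $-x-2d = m-d+1$) and then dualize to obtain an isomorphism of complexes $\widecheck{C_\bullet}(-m-d-1, 1^d) \otimes_R \mathbb{Z} \cong \widecheck{C_\bullet}(m-d+1, 1^d) \otimes_R \mathbb{Z}$. Then, using $ii)$ applied to the hook $(m-d+1, 1^d)$, whose ribbon composition is $(d+1, 1^{m-d})$, I would identify $H^{2m+2-i}_{st}(\mathbb{S}_{(m-d+1, 1^d)}\Omega)$ with the corresponding homology. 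Matching the two sides and tracking the degree shifts coming from the dualization, the shift $[-d]$, and the total number of boxes yields the claimed identification with shift $2m+2-i$.

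The main obstacle will be the careful establishment of $iii)$, specifically verifying that over $\mathbb{Z}$ the two-column cohomology is genuinely computed by the shifted dual complex $\widecheck{C_\bullet}(-m-d-1, 1^d)[-d]$ rather than by $C_\bullet(-m-d-1, 1^d)$ itself (the two are quasi-isomorphic over a field, but not over $\mathbb{Z}$). Once this is in place, the deduction of $iv)$ is a routine combination of \Cref{main}, $ii)$, and $iii)$; the contrast between the shift $2m+1-i$ in \Cref{sheaf} and $2m+2-i$ in $iv)$ should then fall out as the expected artifact of this dualization.
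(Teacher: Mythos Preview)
Your outline for parts $i)$--$iii)$ is in line with the paper's approach: both extend the arguments of \cite[Section~5]{raicukeller} to $\mathbb{Z}$ via universal resolutions by tensor products of exterior powers and the hypercohomology spectral sequence, with the cohomology of $\bigl(\exterior{d_1}\Omega\bigr)\otimes\cdots\otimes\bigl(\exterior{d_h}\Omega\bigr)$ computed inductively from the standard short exact sequence on exterior powers.

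For part $iv)$, however, there is a genuine gap. Your plan is to keep the dual throughout: from $iii)$ you have the homology of $\widecheck{C_\bullet}(-m-d-1,1^d)$, apply \Cref{main} and dualize to reach $\widecheck{C_\bullet}(m-d+1,1^d)$, and then invoke $ii)$ for the hook $(m-d+1,1^d)$. But $ii)$ applied to that hook produces the homology of $C_\bullet(d+1,1^{m-d})$, a complex of a different length built from a different composition, and certainly not the dual of $C_\bullet(m-d+1,1^d)$. No amount of shift bookkeeping will identify these two objects; the step ``matching the two sides and tracking the degree shifts'' does not go through.

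What the paper does instead is \emph{undualize first}. Since $C_\bullet(-m-d-1,1^d)\otimes_R\mathbb{Q}$ is exact (Borel--Weil--Bott in characteristic zero forces $H^\bullet_{st}(\mathbb{S}_\lambda\Omega)$ to vanish rationally for a genuine two-column $\lambda$), the integral homology is pure torsion. The universal coefficient theorem then converts the homology of the dual complex in $iii)$ into the homology of $C_\bullet(-m-d-1,1^d)\otimes_R\mathbb{Z}$ itself, with a degree shift of one. Only after this does one apply \Cref{main} and then $ii)$ with the composition $(m-d+1,1^d)$. This UCT-plus-torsion step is precisely what generates the discrepancy between $2m+1-i$ in \Cref{sheaf} and $2m+2-i$ here; it does not ``fall out'' of the formulation of $iii)$ alone, and it is the missing ingredient in your argument.
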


\begin{proof}
    By \cite{AB88}, skew Schur functors have universal resolutions by tensor products of exterior powers over arbitrary commutative rings. Therefore, $\mathbb{S}_{\lambda / \mu} \Omega$ admits a resolution which has terms given by direct sums of tensor products of exterior powers of $\Omega$. If $d = |\lambda|- |\mu| = (\sum \lambda_i) - (\sum \mu_i),$
    then these tensor powers of exterior powers of $\Omega$ will be of the form
    \[ \left (\exterior{d_1}\Omega \right )\otimes_{\mathscr{O}_{\mathbb{P}^n(\mathbb{Z})}} \dots \otimes_{\mathscr{O}_{\mathbb{P}^n(\mathbb{Z})}} \left(\exterior{d_h}\Omega\right) , \mbox{ for some } h \mbox{ and } \sum d_i = d.\]
    The cohomology of such sheaves is given by 
    \begin{align}\label{Hi}H^i\left (\mathbb{P}^n(\mathbb{Z}), \left (\exterior{d_1}\Omega \right )\otimes_{\mathscr{O}_{\mathbb{P}^n(\mathbb{Z})}} \dots \otimes_{\mathscr{O}_{\mathbb{P}^n(\mathbb{Z})}} \left(\exterior{d_h}\Omega\right)\right ) = \begin{cases}
        \mathbb{Z}, & i = d,\\
        0, & \mbox{otherwise,}
    \end{cases}\end{align}
    assuming that $n \geq d$. This calculation is followed by a double induction argument on the number of factors $h$ and the size of the last factor $d_h$ using the long exact sequence on cohomology and the short exact sequence
    \begin{align}\label{ses}0 \to \exterior{d} \Omega \to \exterior{d} (\mathbb{Z}^{n+1}) \otimes_{\mathscr{O}_{\mathbb{P}^n(\mathbb{Z})}} \mathscr{O}_{\mathbb{P}^n(\mathbb{Z})}(-d) \to \exterior{d-1} \Omega \to 0  \end{align}
    for $d \geq 1$. Also, necessary for this calculation is the fact that 
    \[H^j\left(\mathbb{P}^n(\mathbb{Z}),\left (\exterior{d_1}\Omega \right )\otimes_{\mathscr{O}_{\mathbb{P}^n(\mathbb{Z})}} \dots \otimes_{\mathscr{O}_{\mathbb{P}^n(\mathbb{Z})}} \left(\exterior{d_h}\Omega\right)(-i)\right) = 0\]
    for all $i$ in the range $1 \leq i \leq n-d$ and all $j$ (again under the assumption that $n \geq d$). This fact can similarly be proven using induction and \eqref{ses}.
    
    By \Cref{Hi}, applying the hypercohomology spectral sequence \cite[Proposition 5.7.9]{Weibel} to this resolution results in a single nonzero row on the first page. This nonzero row is then a chain complex of free abelian groups, which has homology groups matching the sheaf cohomology groups for $\mathbb{S}_{\lambda/\mu}$ up to a shift. Further, these chain complexes will be the same for $n\geq d$, so $H^i(\mathbb{P}^n(\mathbb{Z}), \mathbb{S}_{\lambda / \mu} \Omega)$ is independent of $n$ for $n \geq d$ proving $i)$. 
    
     In the special case when $\mathbb{S}_{\lambda / \mu}$ is a ribbon or two column Schur functor, the complexes of free abelian groups described above can be derived from explicit resolutions for the skew Schur functor. Such a resolution for ribbons is given by \cite[Theorem 5.1]{raicukeller}, and such a resolution for two column partitions is given by \cite[Section 4]{AB85} or \cite[Theorem 5.6]{raicukeller}. Finally, the differential maps can be derived by noting that \cite[Theorem 4.9 ii)]{raicukeller} generalizes to projective space over the integers, which states that the map induced on cohomology
     \[H^j(\exterior{a+b} \Omega) \to H^j(\exterior{a} \Omega) \otimes_{\mathbb{Z}} H^j(\exterior{b} \Omega)\]
     is multiplication by $\binom{a+b}{a}$. The statement of $ii)$ and $iii)$ is then obtained by taking homology, in analogy with the statement of \cite[Theorem 5.4]{raicukeller} and \cite[Theorem 5.7(2)]{raicukeller}.
    
    As $C_{\bullet}(-m-d-1, 1^d)\otimes_R \mathbb{Z}$ is a complex of free abelian groups, the universal coefficient theorem \cite{Hatcher} gives a relationship between its homology groups with the homology groups of its dual. Since $d >1$, the complex $C_{\bullet}(-m-d-1,1^d)\otimes_R \mathbb{Q}$ is exact, as the sheaf $\mathbb{S}_{\lambda}\Omega$ now considered over $\mathbb{Q}$ does not have nonzero sheaf cohomology groups by the Borel-Weil-Bott theorem. Thus, the homology of $C_{\bullet}(-m-d+1,1^d)\otimes_R \mathbb{Z}$ is only torsion. An application of the universal coefficient theorem yields
    \[H^i_{st}(\mathbb{S}_{\lambda} \Omega) = H_{d+m-i}(\widecheck{C_{\bullet}}(-m-d-1,1^d)[-d]\otimes_R \mathbb{Z})\] 
    \[= H_{i-m-1}(C_{\bullet}(-m-d-1,1^d)\otimes_R \mathbb{Z}).\]
    The rest of the proof of $iv)$ proceeds in analogy with our proof of \Cref{sheaf} of \cite{raicukeller} above now using $ii)$ and $iii)$ instead of \cite[Theorem 5.4]{raicukeller} and \cite[Theorem 5.7(2)]{raicukeller}.
\end{proof}

{\bf Acknowledgments}:
The heart of this paper was born during the PRAGMATIC school which took place in Catania, Italy, during the summer of 2023. The authors are grateful to the organizers for restarting this program. The authors would like to thank Claudiu Raicu for his interesting lectures and for introducing them to this conjecture. Further thanks go to Keller VandeBogert for patiently sharing his insights about the complexes, and to Alessio Sammartano for helpful discussions. The authors also thank the anonymous referee for thoroughly reading the paper and making valuable suggestions that improved both its exposition and the mathematical content.

\bibliographystyle{plain}
\bibliography{biblio}

\end{document}